
\documentclass[11pt]{article}
\usepackage{amsfonts,amscd,amssymb, amsmath}
\usepackage[usenames]{color}

\setcounter{MaxMatrixCols}{10}

\usepackage{graphicx}

\usepackage{euscript}
\usepackage[all,tips]{xy}
\usepackage{epic,eepic}\setlength{\unitlength}{.4mm}

\usepackage{color}
\usepackage{tikz}

\setlength{\textheight}{8.9in}
\setlength{\textwidth}{6.3in}
\setlength{\headheight}{12pt}
\setlength{\headsep}{25pt} \setlength{\footskip}{25pt}
\setlength{\oddsidemargin}{0.10in}
\setlength{\evensidemargin}{0.10in}
\setlength{\marginparwidth}{0.08in}
\setlength{\marginparsep}{0.001in}
\setlength{\marginparpush}{0.4\parindent}
\setlength{\topmargin}{-0.54cm}
\setlength{\columnsep}{10pt} \setlength{\columnseprule}{0pt}
\setlength{\parindent}{15pt}
\newtheorem{definition}{Definition}[section]

\newtheorem{proposition}[definition]{Proposition}
\newtheorem{corollary}[definition]{Corollary}
\newtheorem{remark}[definition]{Remark}

\newtheorem{theorem}[definition]{Theorem}

\def\rawo\lonra{\longrightarrow}

\def\ot{\otimes}

\allowdisplaybreaks[4]
\newenvironment{proof}{{\it Proof.}}{\hfill $ \square $ \vskip 4mm}














\begin{document}

\title{$\{\sigma , \tau \}$-Rota-Baxter operators, infinitesimal Hom-bialgebras and the associative 
(Bi)Hom-Yang-Baxter equation}
\author{Ling Liu \\
College of Mathematics, Physics and Information Engineering,\\
Zhejiang Normal University, \\
Jinhua 321004, China \\
e-mail: ntliulin@zjnu.cn \and Abdenacer Makhlouf \\
Universit\'{e} de Haute Alsace, \\
IRIMAS - d\'epartement de  Math\'{e}matiques, \\
6, rue des fr\`{e}res Lumi\`{e}re, F-68093 Mulhouse, France\\
e-mail: Abdenacer.Makhlouf@uha.fr \and Claudia Menini \\
University of Ferrara, 
Department of Mathematics, \\
Via Machiavelli 35, Ferrara, I-44121, Italy \\
e-mail: men@unife.it \and Florin Panaite \\
Institute of Mathematics of the Romanian Academy,\\
PO-Box 1-764, RO-014700 Bucharest, Romania\\
e-mail: florin.panaite@imar.ro }
\maketitle

\begin{abstract}
We introduce the concept of 
$\{\sigma , \tau \}$-Rota-Baxter operator, as a twisted version of a Rota-Baxter operator of weight zero.  We show how to 
obtain a certain $\{\sigma , \tau \}$-Rota-Baxter operator from a solution of the associative (Bi)Hom-Yang-Baxter equation, and, 
in a compatible way, 
a Hom-pre-Lie algebra from an infinitesimal Hom-bialgebra. \\

\begin{small}
\noindent \textbf{Keywords}: Rota-Baxter operator, Hom-pre-Lie algebra, infinitesimal Hom-bialgebra,  
associative (Bi)Hom-Yang-Baxter
equation.\\
\textbf{MSC2010}: 15A04, 17A99, 17D99.
\end{small}
\end{abstract}

\section{Introduction}

Hom-type algebras appeared in the Physics literature related to quantum deformations of algebras of vector fields; 
these types of algebras satisfy a modified version of the Jacobi identity involving a homomorphism, and were called 
Hom-Lie algebras by Hartwig, Larsson and Silvestrov in \cite{hls}, \cite{larsson}. Afterwards, Hom-analogues of 
various classical algebraic structures have been introduced in the literature, such as Hom-(co)associative (co)algebras, 
Hom-dendriform algebras, Hom-pre-Lie algebras etc. Recently, structures of a more general type have been 
introduced in \cite{gmmp}, called BiHom-type algebras, for which a classical algebraic identity is twisted by 
two commuting homomorphisms (called structure maps). 

Infinitesimal bialgebras were introduced by Joni and Rota in \cite{jonirota} (under the name infinitesimal coalgebra). 
The current name is due to Aguiar, who developed a theory for them in a series of papers (\cite{aguiarcontemp,aguiarjalgebra, aguiarlectnotes}). It turns out that infinitesimal bialgebras have connections with some other 
concepts such as Rota-Baxter operators, pre-Lie algebras, Lie bialgebras etc. Aguiar discovered a large class of 
examples of infinitesimal bialgebras, namely he showed that the path algebra of an arbitrary quiver carries a natural structure of 
infinitesimal bialgebra. In an analytical context, infinitesimal bialgebras have been used in \cite{voiculescu} by Voiculescu 
in free probability theory. 

The Hom-analogue of infinitesimal bialgebras, called 
infinitesimal Hom-bialgebras, was introduced and studied by Yau in \cite{yauinf}. He extended to the Hom-context some 
of Aguiar's results; however, there exist several basic results of Aguiar that do not have a Hom-analogue 
in Yau's paper. It is our aim here to complete the study, by proving those Hom-analogues.  

The associative Yang-Baxter equation was introduced by Aguiar in \cite{aguiarcontemp}. 
Let $(A, \mu )$ be an associative algebra and $r=\sum _ix_i\otimes y_i\in A\otimes A$; then $r$ is called a solution of the  
associative Yang-Baxter equation if 
\begin{eqnarray*}
&&\sum _{i, j}x_i\otimes y_ix_j\otimes y_j=\sum _{i, j}x_ix_j\otimes y_j\otimes y_i+
\sum _{i, j}x_i\otimes x_j\otimes y_jy_i. 
\end{eqnarray*}
In this situation, Aguiar noticed in \cite{aguiarlmp} that the map $R:A\rightarrow A$, $R(a)=\sum _ix_iay_i$, is a 
Rota-Baxter operator of weight zero. We recall (see for instance \cite{Guo}) that 
if $B$ is an algebra and 
$R:B\rightarrow B$ is a linear map, then $R$ is called a Rota-Baxter operator of weight zero if 
\begin{eqnarray*}
&&R(a)R(b)=R(R(a)b+aR(b)), \;\;\;\forall \;\;a, b\in B. 
\end{eqnarray*}
Rota-Baxter operators appeared first in the work of Baxter in probability and the study of fluctuation theory, and were 
intensively studied by Rota in connection with combinatorics. Rota-Baxter operators occured also in other 
areas of mathematics and physics,  notably in the seminal work of Connes and Kreimer \cite{ConnesKreimer} 
concerning a Hopf algebraic approach to renormalization in quantum field theory. 

The Hom-analogue of the associative Yang-Baxter equation was introduced by Yau in \cite{yauinf}, but without exploring 
the relation between this new equation and Rota-Baxter operators. Our first aim is to obtain Hom and BiHom-analogues of Aguiar's observation mentioned above, expressing a relationship between  Hom and BiHom-analogues of the associative 
Yang-Baxter equation and certain generalized Rota-Baxter operators.
The 
BiHom-analogue of the associative Yang-Baxter equation is defined
as follows. Let $(A, \mu , \alpha , \beta )$ be a BiHom-associative algebra and $r=\sum _ix_i\otimes y_i\in A\otimes A$ such that 
$(\alpha \otimes \alpha )(r)=r=(\beta \otimes \beta )(r)$; we say that $r$ is a solution of the associative BiHom-Yang-Baxter equation if 
\begin{eqnarray*}
&&\sum _{i, j}\alpha (x_i)\otimes y_ix_j\otimes \beta (y_j)=\sum _{i, j}x_ix_j\otimes \beta (y_j)\otimes \beta (y_i)+
\sum _{i, j}\alpha (x_i)\otimes \alpha (x_j)\otimes y_jy_i. 
\end{eqnarray*}
To such an element $r$ we want to associate a certain linear map $R:A\rightarrow A$, that will turn out to be a 
twisted version of a 
Rota-Baxter operator of weight zero. More precisely, the map $R$ is defined by 
\begin{eqnarray*}
&&R:A\rightarrow A, \;\;\;R(a)=\sum _i\alpha \beta ^3(x_i)(a\alpha ^3(y_i))=
\sum _i(\beta ^3(x_i)a)\alpha ^3\beta (y_i), \;\;\;\;\;\forall \;\;a\in A,  
\end{eqnarray*}
which in the Hom case (i.e. for $\alpha =\beta $) reduces to $R(a)=\sum _i\alpha (x_i)(ay_i)=\sum _i (x_ia)\alpha (y_i)$, 
for all $a\in A$, 
and the equation it satisfies is (see Theorem \ref{ABRB}) 
\begin{eqnarray*}
&&R(\alpha \beta (a))R(\alpha \beta (b))=R(\alpha \beta (a)R(b)+R(a)\alpha \beta (b)), \;\;\;\forall \;\;a, b\in A. 
\end{eqnarray*}
We call a linear map satisfying this equation an $\alpha \beta $-Rota-Baxter operator (of weight zero). This is a 
particular case of the following concept we introduce and study in this paper. Let $B$ be an algebra, $\sigma , \tau :B\rightarrow B$ algebra maps and 
$R:B\rightarrow B$ a linear map. We call $R$ a $\{\sigma , \tau\}$-Rota-Baxter operator if 
\begin{eqnarray*}
&&R(\sigma (a))R(\tau (b))=R(\sigma (a)R(b)+R(a)\tau (b)), \;\;\;\forall \;\;a, b\in B. 
\end{eqnarray*}
This concept is a sort of modification of the concept of $(\sigma , \tau )$-Rota-Baxter operator 
introduced in \cite{panvan} (inspired by an example in \cite{manchon}). In Section \ref{sec2} we 
prove that certain classes of $\{\sigma , \tau\}$-Rota-Baxter operators have 
similar properties to those of a usual Rota-Baxter operator of weight zero (see Theorem \ref{simprop} and 
its corollaries, and Proposition \ref{analogLie}). 

Our second aim is to extend  to infinitesimal Hom-bialgebras the following result from \cite{aguiarlectnotes}
providing a left pre-Lie algebra from a given  infinitesimal bialgebra.
\begin{theorem} [Aguiar] \label{zerounu}
Let $(A, \mu , \Delta )$ be an infinitesimal bialgebra, with notation $\mu (a\otimes b)=ab$ and 
$\Delta (a)=a_1\otimes a_2$, for all $a, b\in A$. If we define a new operation on $A$ by $a\bullet b=b_1ab_2$, then 
$(A, \bullet $) is a left pre-Lie algebra. 
\end{theorem}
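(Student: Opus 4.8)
The plan is to verify directly the left pre-Lie identity
$$(a\bullet b)\bullet c - a\bullet(b\bullet c) = (b\bullet a)\bullet c - b\bullet(a\bullet c),$$
that is, to show that the associator $(a,b,c):=(a\bullet b)\bullet c - a\bullet(b\bullet c)$ is symmetric in its first two arguments. The only structural inputs are the associativity of $\mu$, the coassociativity of $\Delta$, and the infinitesimal compatibility condition, which in Sweedler notation reads $\Delta(xy)=xy_1\otimes y_2 + x_1\otimes x_2 y$.

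First I would record the coproduct of a triple product, since both bracketings involve applying $\Delta$ to words of length three. Applying the compatibility twice (first to the product $x\cdot(yz)$, then to $yz$) gives
$$\Delta(xyz)=xyz_1\otimes z_2 + xy_1\otimes y_2 z + x_1\otimes x_2 yz.$$
This is the only preliminary computation required, and it is the point at which the compatibility condition enters the argument.

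Next I compute the two bracketings. Since $a\bullet b = b_1 a b_2$, the left-nested term is immediate: using $\Delta(c)=c_1\otimes c_2$ and $\Delta(b)=b_1\otimes b_2$,
$$(a\bullet b)\bullet c = c_1(a\bullet b)c_2 = c_1 b_1 a b_2 c_2.$$
For the right-nested term I must first compute $\Delta(b\bullet c)=\Delta(c_1 b c_2)$. Feeding $x=c_1$, $y=b$, $z=c_2$ into the triple-product formula and then collapsing the iterated coproducts of $c$ by coassociativity (writing $(\Delta\otimes\Id)\Delta(c)=c_1\otimes c_2\otimes c_3$) yields
$$\Delta(c_1 b c_2)=c_1 b c_2\otimes c_3 + c_1 b_1\otimes b_2 c_2 + c_1\otimes c_2 b c_3,$$
whence, applying $a\bullet(-)$,
$$a\bullet(b\bullet c)=c_1 b c_2 a c_3 + c_1 b_1 a b_2 c_2 + c_1 a c_2 b c_3.$$

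Forming the associator, the common term $c_1 b_1 a b_2 c_2$ cancels and leaves $(a,b,c)=-\,c_1 b c_2 a c_3 - c_1 a c_2 b c_3$, which is manifestly unchanged under the swap $a\leftrightarrow b$; hence $(a,b,c)=(b,a,c)$, which is exactly the left pre-Lie identity. I expect the only genuine obstacle to be the bookkeeping: correctly tracking the Sweedler legs through the nested products and, in particular, invoking coassociativity to merge the doubly-split coproduct of $c$ into the single symbol $c_1\otimes c_2\otimes c_3$, since a careless relabelling there is precisely where such identities tend to break down.
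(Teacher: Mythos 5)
Your proof is correct: the triple-product coproduct formula, the cancellation of the common term $c_1b_1ab_2c_2$, and the resulting expression $-c_1bc_2ac_3-c_1ac_2bc_3$ (manifestly symmetric in $a$ and $b$) all check out, and symmetry of the associator in its first two arguments is exactly the left pre-Lie identity. The paper itself states this theorem without proof (citing Aguiar), but your direct-computation strategy --- apply the compatibility condition twice, cancel the overlapping term, and observe the symmetry in the first two arguments --- is precisely the strategy the paper uses for its Hom-analogue in Proposition \ref{infprelie}, of which your argument is the specialization $\alpha=id_A$.
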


Let $(A, \mu , \Delta , \alpha )$ be an infinitesimal 
Hom-bialgebra, with notation $\mu (a\otimes b)=ab$ and 
$\Delta (a)=a_1\otimes a_2$, for all $a, b\in A$. We want to define a new multiplication $\bullet $ on $A$, turning it into a 
left Hom-pre-Lie algebra. It is not clear what the formula for this multiplication should be (note for instance that 
the obvious choice $a\bullet b=\alpha (b_1)(ab_2)=(b_1a)\alpha (b_2)$ does not work), and we need to guess it.  
We proceed as follows. 
Recall first the following old result: 
\begin{theorem}[Gel'fand-Dorfman]
Let $(A, \mu )$ be an associative and commutative algebra, with notation $\mu (a\otimes b)=ab$, and $D:A\rightarrow A$ 
a derivation. Define a new multiplication on $A$ by $a\star b=aD(b)$. Then $(A, \star )$ is a left pre-Lie 
algebra (it is actually even a Novikov algebra).
\end{theorem}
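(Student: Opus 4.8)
The plan is to verify directly that the new multiplication $a\star b=aD(b)$ satisfies the left pre-Lie identity
\[
(a\star b)\star c-a\star(b\star c)=(b\star a)\star c-b\star(a\star c),
\]
i.e.\ that the left-associator $(a,b,c):=(a\star b)\star c-a\star(b\star c)$ is symmetric in its first two arguments. First I would expand both sides using only the definition of $\star$, associativity of $\mu$, and the two extra hypotheses at our disposal: commutativity of $\mu$ and the Leibniz rule $D(xy)=D(x)y+xD(y)$.

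Expanding the left-associator, $(a\star b)\star c=(aD(b))D(c)$ and $a\star(b\star c)=a\,D(bD(c))=a\big(D(b)D(c)+bD^2(c)\big)$, so by associativity
\[
(a,b,c)=aD(b)D(c)-aD(b)D(c)-abD^2(c)=-\,abD^2(c).
\]
The key point is that this expression is manifestly symmetric in $a$ and $b$, since $ab=ba$ by commutativity. Therefore $(a,b,c)=(b,a,c)$, which is exactly the left pre-Lie identity. Notice that the single application of the Leibniz rule is what lets the cross-term $aD(b)D(c)$ cancel, and commutativity is what converts the surviving term $-abD^2(c)$ into a symmetric one.

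For the stronger Novikov claim I would additionally check right-commutativity, $(a\star b)\star c=(a\star c)\star b$. Here $(a\star b)\star c=(aD(b))D(c)=aD(b)D(c)$, and this is symmetric in $b$ and $c$ as written (it is a product of three commuting elements $a$, $D(b)$, $D(c)$), so the identity holds immediately. Thus $(A,\star)$ is a Novikov algebra, and in particular a left pre-Lie algebra.

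I do not anticipate a genuine obstacle, since every step reduces to a short computation once the Leibniz rule and commutativity are invoked in the right place; the only thing to be careful about is bookkeeping of which factors commute, and tracking the second-order term $D^2(c)$ correctly. The reason this is worth isolating as a lemma is not its difficulty but its role as a template: the Hom-analogue will replace $D$ by a twisted derivation and $\mu$ by a (non-commutative) multiplication, so the guiding heuristic for guessing the correct Hom-pre-Lie product $\bullet$ comes precisely from seeing which features of this computation must be preserved under the twist by $\alpha$.
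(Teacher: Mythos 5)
Your proof is correct, and it follows essentially the same route as the paper: the paper states the Gel'fand--Dorfman theorem without proof, but its proof of the generalization (Proposition \ref{genGD}, the Hom-Novikov statement for $\alpha^k$-derivations) is exactly your computation with the twist maps inserted --- expand the associator, let the Leibniz rule produce the cancelling cross-term, reduce the remainder to $\pm\,xy\,D^2(z)$ (there $\alpha^{2k}(xy)\alpha(D^2(z))$), and invoke commutativity for the symmetry in the first two arguments, then prove right-commutativity the same way for the Novikov condition. Your argument is precisely the $\alpha=\mathrm{id}$, $k=0$ specialization of that proof.
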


We make the following observation: if the infinitesimal bialgebra in Aguiar's Theorem is commutative, then his theorem is a 
particular case of the theorem of Gel'fand and Dorfman. Indeed, by using commutativity, 
the multiplication $\bullet $ becomes 
$a\bullet b=b_1ab_2=ab_1b_2=aD(b)$, 
where we denoted by $D$ the linear map $D:A\rightarrow A$, $D(b)=b_1b_2$, i.e. $D=\mu \circ \Delta $, 
and it is well-known 
(see \cite{aguiarcontemp}) that $D$ is a derivation.  

We want to exploit this observation in order to guess the formula for the multiplication in the Hom case. There, we 
already have an analogue of the Gel'fand-Dorfman Theorem, due to Yau (see \cite{yaunovikov}), saying that if 
$(A, \mu , \alpha )$ is a 
commutative Hom-associative algebra and $D:A\rightarrow A$ is a derivation (in the usual sense) commuting with $\alpha $ 
and we define a new multiplication on $A$ by $a*b=aD(b)$, then $(A, *, \alpha )$ is a left Hom-pre-Lie algebra 
(it is actually even Hom-Novikov).   So, we begin with a commutative infinitesimal Hom-bialgebra 
$(A, \mu , \Delta , \alpha )$ and we define the map $D:A\rightarrow A$ also by the formula $D=\mu \circ \Delta $. 
The problem is that, because of the condition from the definition of an infinitesimal Hom-bialgebra satisfied by $\Delta $ (which involves the map $\alpha $), $D$ is \emph{not} a derivation (so we cannot use Yau's result mentioned above). Instead, 
it turns out that $D$ is a so-called $\alpha ^2$-derivation, that is it satisfies  $D(ab)=\alpha ^2(a)D(b)+D(a)\alpha ^2(b)$. 
So what we need first is a generalization of Yau's version of the Gel'fand-Dorfman Theorem, one that would apply not only 
to derivations but also to $\alpha ^2$-derivations. A generalization dealing  
with $\alpha ^k$-derivations, for $k$ an arbitrary natural number,  is achieved in Proposition \ref{genGD}. 
The outcome is a left Hom-pre-Lie algebra (actually, a Hom-Novikov algebra) 
whose structure map is $\alpha ^{k+1}$. Coming back to the case $k=2$, 
by applying this result we obtain that, for the commutative infinitesimal Hom-bialgebra we started with, 
we are able to obtain a left Hom-pre-Lie algebra structure on it, with structure map $\alpha ^3$ and 
multiplication $x\bullet y=\alpha ^2(x)D(y)=\alpha ^2(x)(y_1y_2)$, which, by using commutativity and 
Hom-associativity, may be written as $x\bullet y=\alpha (y_1)(\alpha (x)y_2)$. 

We can consider this formula even if the infinitesimal Hom-bialgebra is \emph{not} commutative, and it turns out 
that this is the formula we were trying to guess (see Proposition \ref{infprelie}). 

Let $(A, \mu , \Delta _r)$ be a quasitriangular infinitesimal bialgebra, i.e. the comultiplication is given by the principal 
derivation corresponding to a solution $r=\sum _ix_i\otimes y_i$ of the associative Yang-Baxter equation. 
There are two left pre-Lie 
algebras associated to $A$: the first one is obtained by Theorem \ref{zerounu}, the second is obtained from the 
fact that the Rota-Baxter operator $R:A\rightarrow A$, $R(a)=\sum _ix_iay_i$ provides a dendriform algebra, which 
in turn provides a left pre-Lie algebra. Aguiar proved in \cite{aguiarlectnotes} that these two left pre-Lie algebras coincide. 
Our last result shows that the Hom-analogue of this fact is also true. 

In a subsequent paper (\cite{lmmp3}) we will introduce the BiHom-analogue of infinitesimal bialgebras and prove the 
BiHom-analogue of Theorem \ref{zerounu}. It turns out that things are more complicated than in the Hom case, 
and moreover the result in the Hom case is \emph{not} a particular case of the corresponding result in the BiHom case. 
This comes essentially from the following phenomenon. A BiHom-associative algebra $(A, \mu , \alpha , \beta )$ 
for which $\alpha =\beta $ is the same thing as the Hom-associative algebra $(A, \mu , \alpha )$. But a left 
BiHom-pre-Lie algebra $(A, \mu , \alpha , \beta )$ (as defined in \cite{lmmp2}) for which $\alpha =\beta $ is 
\emph{not} the same thing as the left Hom-pre-Lie algebra $(A, \mu , \alpha )$, unless $\alpha $ is bijective. 

\section{Preliminaries}\label{sec1} 

\setcounter{equation}{0} 

We work over a base field $\Bbbk $. All
algebras, linear spaces etc. will be over $\Bbbk $; unadorned $\otimes $
means $\otimes_{\Bbbk}$. Unless otherwise specified, the
(co)algebras ((co)associative or not) that will appear in what follows are
\emph{not} supposed to be (co)unital, a multiplication $\mu :V\otimes
V\rightarrow V$ on a linear space $V$ is denoted by $\mu
(v\otimes v^{\prime })=vv^{\prime }$, and for a comultiplication $\Delta :C\rightarrow
C\otimes C$ on a linear space $C$ we use a Sweedler-type notation $\Delta
(c)=c_1\otimes c_2$, for $c\in C$. 
For the composition of two maps $f$
and $g$, we will write either $g\circ f$ or simply $gf$. For the identity
map on a linear space $V$ we will use the notation $id_V$. 
\begin{definition} (\cite{gmmp}) 
A BiHom-associative algebra is a 4-tuple $\left( A,\mu ,\alpha ,\beta \right) $, where $A$ is
a linear space, $\alpha , \beta :A\rightarrow A$ 
and $\mu :A\otimes A\rightarrow A$ are linear maps, such that 
$\alpha \circ \beta =\beta \circ \alpha $, 
$\alpha (xy) =\alpha (x)\alpha (y)$, $\beta (xy)=\beta (x)\beta (y)$ and the so-called BiHom-associativity condition
\begin{eqnarray}
\alpha (x)(yz)=(xy)\beta (z) \label{BHassoc}
\end{eqnarray}
hold, for all $x, y, z\in A$. The maps $\alpha $ and $\beta $ (in this order) are called the structure maps
of $A$. 
\end{definition}

A Hom-associative algebra, as defined in \cite{ms1}, is a 
BiHom-associative algebra $\left( A,\mu ,\alpha ,\beta \right) $ for which $\alpha =\beta $. The defining relation, 
\begin{eqnarray}
\alpha (x)(yz)=(xy)\alpha (z), \;\;\;\forall \;\;x, y, z\in A, 
\label{Hass}
\end{eqnarray}
is called the Hom-associativity condition and the map $\alpha $ is called the structure map. 

If $(A, \mu )$ is an associative algebra and $\alpha , \beta :A\rightarrow A$ are two commuting algebra maps, then 
$A_{(\alpha , \beta )}:=(A, \mu \circ (\alpha \otimes \beta ), \alpha , \beta )$ is a BiHom-associative algebra, 
called the Yau twist of $A$ via the maps $\alpha $ and $\beta $. 
\begin{definition} (\cite{ms2}) 
A Hom-coassociative coalgebra is a triple $(C, \Delta, \alpha )$, 
in which $C$ is a linear space, $\alpha  :C\rightarrow C$
and $\Delta :C\rightarrow C\otimes C$ are linear maps, such that 
$(\alpha \otimes \alpha )\circ \Delta = \Delta \circ \alpha $ and 
\begin{eqnarray}
&&(\Delta \otimes \alpha )\circ \Delta = (\alpha \otimes \Delta )\circ \Delta . \label{Hcoass}
\end{eqnarray}
The map $\alpha $ is called the structure map  and (\ref{Hcoass}) is called the Hom-coassociativity condition.
\end{definition}

For a Hom-coassociative coalgebra $(C, \Delta , \alpha )$, we will use the extra notation $(id \otimes \Delta )(\Delta (c))=c_1\otimes c_{(2, 1)}
\otimes c_{(2, 2)}$ and $(\Delta \otimes id)(\Delta (c))=c_{(1, 1)}\otimes c_{(1, 2)}\otimes c_2$, for all $c\in C$. 
\begin{definition}
A left pre-Lie algebra is a pair $(A, \mu )$, where $A$ is a a linear space and 
$\mu :A\ot A\rightarrow A$ is a linear map satisfying the condition 
\begin{eqnarray*}
&&x(yz)-(xy)z=y(xz)-(yx)z, \;\;\;  \forall \;\;x, y, z\in A. 
\end{eqnarray*}

A morphism of left pre-Lie algebras from $(A, \mu )$ to $(A', \mu ')$ is a linear map 
$\alpha :A\rightarrow A'$ satisfying $\alpha (xy)=\alpha (x)\alpha (y)$, for all $x, y\in A$. 
\end{definition}
\begin{definition} (\cite{ms1}, \cite{yaunovikov}) 
A left Hom-pre-Lie algebra is a triple $(A, \mu ,
\alpha )$, where $A$ is a linear space and $\mu :A\otimes
A\rightarrow A$ and $\alpha :A\rightarrow A$ are linear maps
satifying $\alpha (xy)=\alpha (x)\alpha (y)$ 
and
\begin{eqnarray}
&&\alpha (x)(yz)-(xy)\alpha (z)=\alpha (y)(xz)-(yx)\alpha (z), 
\end{eqnarray}
for all $x, y, z\in A$. We call $\alpha $ the
structure map of $A$. If moreover the condition  
\begin{eqnarray}
&&(xy)\alpha (z)=(xz)\alpha (y), \;\;\; \forall \;\;x, y, z\in A,  \label{homnovikov}
\end{eqnarray}
is satisfied, then $(A, \mu , \alpha )$ is called a Hom-Novikov algebra. 
\end{definition}

If $(A, \mu )$ is a left pre-Lie algebra and $\alpha :A\rightarrow A$ is a morphism of left pre-Lie algebras, then 
$A_{\alpha }:=(A, \alpha \circ \mu , \alpha )$ is a left Hom-pre-Lie algebra, called the Yau twist of $A$ via the map $\alpha $. 
\begin{definition} (\cite{aguiarcontemp}) 
An infinitesimal bialgebra is a triple $(A, \mu , \Delta )$, in which 
$(A, \mu )$ is an associative algebra, $(A, \Delta )$ is a coassociative coalgebra and $\Delta :A\rightarrow A\otimes A$
is a derivation, that is 
$\Delta (ab)=ab_1\otimes b_2+a_1\otimes a_2b$, for all $a, b\in A$.

A morphism of infinitesimal bialgebras from $(A, \mu , \Delta )$ to $(A', \mu ', \Delta ')$ is a linear map 
$\alpha :A\rightarrow A'$ that is a morphism of algebras and a morphism of coalgebras. 
\end{definition}
\begin{definition}(\cite{yauinf}) An infinitesimal Hom-bialgebra is a 4-tuple $(A, \mu , \Delta, \alpha )$, in which 
$(A, \mu , \alpha )$ is a Hom-associative algebra, $(A, \Delta , \alpha )$ is a Hom-coassociative coalgebra and 
\begin{eqnarray}
&&\Delta (ab)=\alpha (a)b_1\otimes \alpha (b_2)+\alpha (a_1)\otimes a_2\alpha (b), \;\;\;\forall \;\;a, b\in A.
\label{hominf}
\end{eqnarray} 
\end{definition}
\begin{definition} (\cite{yauinf})
Let $(A, \mu , \alpha )$ be a Hom-associative algebra and $r=\sum _ix_i\otimes y_i\in A\otimes A$ such that 
$(\alpha \otimes \alpha )(r)=r$. Define the following elements in $A\otimes A\otimes A$: 
\begin{eqnarray*}
&&r_{12}r_{23}=\sum _{i, j}\alpha (x_i)\otimes y_ix_j\otimes \alpha (y_j), \;\;\;\;\;
r_{13}r_{12}=\sum _{i, j}x_ix_j\otimes \alpha (y_j)\otimes \alpha (y_i), \\
&&r_{23}r_{13}=\sum _{i, j}\alpha (x_i)\otimes \alpha (x_j)\otimes y_jy_i, \;\;\;\;\;
A(r)=r_{13}r_{12}-r_{12}r_{23}+r_{23}r_{13}. 
\end{eqnarray*}
We say that $r$ 
is a solution of the associative Hom-Yang-Baxter equation if $A(r)=0$, that is  
\begin{eqnarray}
&&\sum _{i, j}\alpha (x_i)\otimes y_ix_j\otimes \alpha (y_j)=\sum _{i, j}x_ix_j\otimes \alpha (y_j)\otimes \alpha (y_i)+
\sum _{i, j}\alpha (x_i)\otimes \alpha (x_j)\otimes y_jy_i.  \label{AHYBE}
\end{eqnarray}
\end{definition}

We introduce the following variation of the concept introduced by Yau in \cite{yauinf}: 
\begin{definition} \label{QTIHB}
An infinitesimal Hom-bialgebra $(A, \mu , \Delta , \alpha )$ is called quasitriangular if there exists an element 
$r\in A\otimes A$, $r=\sum _ix_i\otimes y_i$, such that $(\alpha \otimes \alpha )(r)=r$ and $r$ is a 
solution of the associative Hom-Yang-Baxter equation, with the property that 
\begin{eqnarray*}
&&\Delta (b)=\sum _i\alpha (x_i)\otimes y_ib-\sum _ibx_i\otimes \alpha (y_i), \;\;\;\forall \;\;b\in A. 
\end{eqnarray*}
In this situation, we denote $\Delta $ by $\Delta _r$. 
\end{definition}

Yau's definition requires  
$\Delta (b)=\sum _ibx_i\otimes \alpha (y_i)-\sum _i\alpha (x_i)\otimes y_ib$, for all $b\in A$. 
This is consistent with Aguiar's convention in \cite{aguiarcontemp}; our choice is consistent with 
the convention in \cite{aguiarlectnotes}.  

\begin{definition} (\cite{lmmp1}) A BiHom-dendriform algebra is a 5-tuple $(A, \prec , \succ , \alpha , \beta )$
consisting of a linear space $A$, linear maps $\prec , \succ :A\otimes A\rightarrow A$ and commuting linear maps 
$\alpha , \beta :A\rightarrow A$ such that $\alpha $ and $\beta $ are multiplicative with respect to $\prec $ and $\succ $ and 
satisfying the conditions
\begin{eqnarray}
&&(x\prec y)\prec \beta (z)=\alpha (x)\prec (y\prec z+y\succ z),  \label{BiHomdend6} \\
&&(x \succ y)\prec \beta (z)=\alpha (x)\succ (y\prec z), \label{BiHomdend7} \\
&&\alpha (x)\succ (y\succ z)=(x\prec y+x\succ y)\succ \beta (z), \label{BiHomdend8}
\end{eqnarray}

for all $x, y, z\in A$. We call $\alpha $ and $\beta $ (in this order) the structure maps
of $A$.
\end{definition}

A dendriform algebra, as introduced by Loday in \cite{loday}, is just a BiHom-dendriform algebra 
$(A, \prec , \succ , \alpha , \beta )$ for which $\alpha =\beta=id_A$. A Hom-dendriform algebra, as introduced in 
\cite{makhloufrotabaxter}, is a BiHom-dendriform algebra $(A, \prec , \succ , \alpha , \beta )$ for which $\alpha =\beta $. 

Let $(A, \prec , \succ )$ be a dendriform algebra and $\alpha , \beta :A\rightarrow A$ two
commuting linear maps that are multiplicative with respect to $\prec $ and $\succ $. Define two new operations on $A$ by 
$x\prec _{(\alpha , \beta )}y=\alpha (x)\prec \beta (y)$ and 
$ x\succ _{(\alpha , \beta )}y=\alpha (x)\succ \beta (y)$, 
for all $x, y\in A$. Then $A_{(\alpha , \beta )}:=(A, \prec _{(\alpha , \beta )}, \succ _{(\alpha , \beta )},
\alpha , \beta )$ is a BiHom-dendriform algebra, called the Yau twist of $A$ via the maps $\alpha $ and $\beta $. 
\begin{proposition}(\cite{makhloufrotabaxter}, \cite{makhloufyau}, \cite{lmmp1}) \label{nacer}
Let $(A, \prec , \succ , \alpha , \beta )$ be a BiHom-dendriform algebra and define a new multiplication on $A$ by 
$x*y=x\prec y+x\succ y$. Then $(A, *, \alpha , \beta )$ is a BiHom-associative algebra. Moreover, if $\alpha =\beta $ 
and we define a new operation on $A$ by $x\circ y=x\succ y-y\prec x$, then $(A, \circ , \alpha )$ is a left Hom-pre-Lie algebra. 
\end{proposition}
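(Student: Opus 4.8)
The plan is to verify the two stated conclusions directly from the BiHom-dendriform axioms \eqref{BiHomdend6}--\eqref{BiHomdend8}, treating them as a pure computation in the style of the classical (non-Hom) dendriform calculus but keeping careful track of the structure maps $\alpha$ and $\beta$. First I would record the hypotheses I am allowed to use: that $\alpha$ and $\beta$ commute and are multiplicative with respect to both $\prec$ and $\succ$ (so $\alpha(x\prec y)=\alpha(x)\prec\alpha(y)$ and likewise for $\succ$ and for $\beta$), and the three axioms relating the two products. From these, multiplicativity of $\alpha$ and $\beta$ with respect to the new product $*$ is immediate, since $\alpha(x*y)=\alpha(x\prec y+x\succ y)=\alpha(x)\prec\alpha(y)+\alpha(x)\succ\alpha(y)=\alpha(x)*\alpha(y)$, and the same for $\beta$.

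For the BiHom-associativity of $*$, the key step is to expand both sides of the desired identity $\alpha(x)*(y*z)=(x*y)*\beta(z)$ into the four ``pure'' terms in $\prec$ and $\succ$. Writing out $\alpha(x)*(y*z)=\alpha(x)\prec(y\prec z)+\alpha(x)\prec(y\succ z)+\alpha(x)\succ(y\prec z)+\alpha(x)\succ(y\succ z)$ and similarly $(x*y)*\beta(z)=(x\prec y)\prec\beta(z)+(x\succ y)\prec\beta(z)+(x\prec y)\succ\beta(z)+(x\succ y)\succ\beta(z)$, I would match them using the three axioms: \eqref{BiHomdend6} handles the two $\prec$-terms on the right against the two mixed $\prec$-terms on the left, \eqref{BiHomdend7} identifies the $(x\succ y)\prec\beta(z)$ term with $\alpha(x)\succ(y\prec z)$, and \eqref{BiHomdend8} identifies the $\succ\succ$ term on the left with the sum of the two remaining $\succ$-terms on the right. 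Term-by-term cancellation then yields the identity. This part is routine bookkeeping; I expect no genuine obstacle, only the need to align the groupings carefully so that each axiom is invoked exactly once.

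For the left Hom-pre-Lie claim (now with $\alpha=\beta$), I would set $x\circ y=x\succ y-y\prec x$ and compute the two sides of the Hom-pre-Lie identity $\alpha(x)\circ(y\circ z)-(x\circ y)\circ\alpha(z)=\alpha(y)\circ(x\circ z)-(y\circ x)\circ\alpha(z)$. Equivalently, I would show the ``associator'' $as(x,y,z):=\alpha(x)\circ(y\circ z)-(x\circ y)\circ\alpha(z)$ is symmetric in $x$ and $y$. Expanding $\circ$ in terms of $\prec$ and $\succ$ produces eight terms on each side; after applying \eqref{BiHomdend6}--\eqref{BiHomdend8} (with $\beta=\alpha$) one reorganizes them so that the non-symmetric contributions cancel. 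The genuinely delicate point here is that the Hom-pre-Lie relation is a \emph{symmetry} statement rather than a vanishing one, so I must keep every term rather than hoping for wholesale cancellation; the mixed terms coming from the minus sign in $y\prec x$ are where sign errors are most likely, and verifying that the residual expression is manifestly $x\leftrightarrow y$ symmetric is the main obstacle.

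The hard part, such as it is, will be purely organizational: ensuring the $\alpha$'s land in the right slots when applying the axioms (since multiplicativity lets me move $\alpha$ in and out of products, but only consistently), and confirming that the restriction $\alpha=\beta$ is genuinely needed for the pre-Lie part whereas the associativity of $*$ holds for general $\alpha,\beta$. Once the matching of terms is set up correctly, both conclusions follow by inspection, and I would present the proof as two short blocks of aligned term comparisons, one for each assertion.
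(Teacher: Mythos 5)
Your proof is correct: both the term-by-term matching for the BiHom-associativity of $*$ (expanding each side into four pure terms and using \eqref{BiHomdend6} for $(x\prec y)\prec\beta(z)$, \eqref{BiHomdend7} for $(x\succ y)\prec\beta(z)$, and \eqref{BiHomdend8} for the two $\succ$-outer terms) and, in the case $\alpha=\beta$, the verification that the Hom-associator of $\circ$ is symmetric in $x$ and $y$ (after substituting the three axioms, the non-symmetric terms cancel and the term $-(x\succ z)\prec\alpha(y)-\alpha(y)\succ(z\prec x)$ is seen to be symmetric by one more application of \eqref{BiHomdend7}) go through exactly as you outline. Note that the paper states Proposition~\ref{nacer} without proof, citing \cite{makhloufrotabaxter}, \cite{makhloufyau}, \cite{lmmp1}; your computation is the standard argument from those references, so there is nothing further to compare beyond recording agreement.
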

\section{$\{\sigma, \tau \}$-Rota-Baxter operators}\label{sec2}
\setcounter{equation}{0}
In this section we introduce and study some classes of modified Rota-Baxter operators which are twisted by algebra maps. We recall first the following well-known concept:
\begin{definition}
Let $A$ be an algebra, $\sigma ,\tau :A\rightarrow A$ algebra maps and 
$D:A\rightarrow A$ a linear map. We call $D$ a $(\tau , \sigma )$-derivation if 
$D(ab)=D(a)\tau (b)+\sigma (a)D(b)$, for all $a, b\in A$.
\end{definition}

The following concept is a variation of the one introduced in \cite{panvan} for associative algebras. 
\begin{definition}
Let $A$ be an algebra, $\sigma , \tau :A\rightarrow A$ algebra maps and 
$R:A\rightarrow A$ a linear map. We call $R$ a $(\sigma , \tau )$-Rota-Baxter operator (of weight zero) if 
\begin{eqnarray*}
&&R(a)R(b)=R(\sigma (R(a))b+a\tau (R(b))), \;\;\;\forall \;\;a, b\in A. 
\end{eqnarray*}
\end{definition}
\begin{remark}
For associative algebras, an $(id , \tau )$-Rota-Baxter operator is the same thing as a $\tau $-twisted 
Rota-Baxter operator, a concept introduced in \cite{brzez}. 
\end{remark}
\begin{remark} \label{brzezinski}
Let $R$ be a $(\sigma , \tau )$-Rota-Baxter operator on an associative algebra $A$. One can easily check that 
the triple $(A, \sigma \circ R, \tau \circ R)$ is a Rota-Baxter system, as defined by Brzezi\'{n}ski in \cite{brzez} 
(the case $\sigma =id_A$ may be found in \cite{brzez}). Consequently, by \cite{brzez}, if we define two 
operations on $A$ by $a\prec b=a\tau (R(b))$ and $a\succ b=\sigma (R(a))b$, then 
$(A, \prec , \succ )$ is a dendriform algebra. 
\end{remark}

It is well-know that, if $A$ is an algebra and $D:A\rightarrow A$ is a bijective linear map, 
then $D$ is a derivation (in the usual sense) if and only $D^{-1}$ is a Rota-Baxter operator of weight zero. This fact may be 
easily generalized, as follows: 
\begin{proposition}
Let $A$ be an algebra, $\sigma , \tau :A\rightarrow A$ algebra maps and 
$D:A\rightarrow A$ a bijective linear map with inverse $R=:D^{-1}$. Then $D$ is a $(\tau , \sigma )$-derivation 
if and only if $R$ is a $(\sigma , \tau )$-Rota-Baxter operator.
\end{proposition}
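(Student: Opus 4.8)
The plan is to exploit the bijectivity of $D$ (equivalently of $R$) to transport the two identities into one another by a change of variables. Since $R=D^{-1}$ we have $R\circ D=D\circ R=id_A$, and because $R$ is surjective the elements $R(u)$ run over all of $A$ as $u$ does; symmetrically, the elements $D(x)$ run over all of $A$ as $x$ does. This surjectivity is exactly what allows a verification of the transported identity for all substituted arguments to be equivalent to the original identity for all arguments.

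For the implication that a $(\tau,\sigma)$-derivation $D$ yields a $(\sigma,\tau)$-Rota-Baxter operator $R$, I would begin from the derivation identity $D(ab)=D(a)\tau(b)+\sigma(a)D(b)$ and substitute $a=R(u)$, $b=R(v)$ for arbitrary $u,v\in A$. Using $D(R(u))=u$ and $D(R(v))=v$, the right-hand side collapses to $u\,\tau(R(v))+\sigma(R(u))\,v$, so the identity reads $D(R(u)R(v))=\sigma(R(u))v+u\tau(R(v))$. Applying $R$ to both sides and using $R\circ D=id_A$ then gives precisely $R(u)R(v)=R(\sigma(R(u))v+u\tau(R(v)))$, which is the Rota-Baxter identity (the order of the two summands being irrelevant).

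For the converse, I would run the same substitution in reverse: starting from the Rota-Baxter identity $R(a)R(b)=R(\sigma(R(a))b+a\tau(R(b)))$, I would set $a=D(x)$, $b=D(y)$, use $R(D(x))=x$ and $R(D(y))=y$ to rewrite the right-hand side as $R(\sigma(x)D(y)+D(x)\tau(y))$, and then apply $D$ (using $D\circ R=id_A$) to obtain $D(xy)=\sigma(x)D(y)+D(x)\tau(y)$, that is, the $(\tau,\sigma)$-derivation identity.

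I do not anticipate any genuine obstacle: the two conditions are literally carried into one another by the change of variables $a=R(u)$ (resp. $a=D(x)$), and the only ingredients used are the linearity of $\sigma,\tau$ (to distribute the outer map over the sum, and implicitly to keep $R$ and $D$ additive when applied to the two-term arguments) together with the fact that $D$ and $R$ are mutually inverse. The single point I would state explicitly is the surjectivity remark above, since it is what guarantees that checking the rewritten identity for all $u,v$ (resp. all $x,y$) is the same as checking the original for all $a,b$.
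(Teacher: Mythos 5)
Your proof is correct and is exactly the routine change-of-variables argument the paper intends: the paper states this proposition without proof, as an easy generalization of the classical fact that a bijective linear map is a derivation if and only if its inverse is a Rota-Baxter operator of weight zero. One minor remark: your surjectivity observation is not actually needed in either direction, since each implication merely substitutes particular elements ($a=R(u),\, b=R(v)$, resp.\ $a=D(x),\, b=D(y)$) into an identity valid for all arguments and then applies the two-sided inverse relation $D\circ R=R\circ D=id_A$.
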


We are interested in the following modification of the concept of $(\sigma , \tau )$-Rota-Baxter operator. 
\begin{definition}
Let $A$ be an algebra, $\sigma , \tau :A\rightarrow A$ algebra maps and 
$R:A\rightarrow A$ a linear map. We call $R$ a $\{\sigma , \tau\}$-Rota-Baxter operator (of weight zero) if 
\begin{eqnarray}
&&R(\sigma (a))R(\tau (b))=R(\sigma (a)R(b)+R(a)\tau (b)), \;\;\;\forall \;\;a, b\in A. \label{genRB}
\end{eqnarray}
\end{definition}
\begin{remark}\label{Rimp}
Let $A$ be an algebra, $\sigma , \tau :A\rightarrow A$ bijective algebra maps and 
$R:A\rightarrow A$ a linear map commuting with $\sigma $ and $\tau $. Then one can easily see that $R$ is a 
$(\sigma , \tau )$-Rota-Baxter operator if and only if $R$ is a $\{\sigma ^{-1}, \tau ^{-1}\}$-Rota-Baxter operator. 
\end{remark}
\begin{remark}
Let $(A, \mu )$ be an algebra, $\sigma :A\rightarrow A$ 
an algebra map and $R:A\rightarrow A$ a Rota-Baxter operator of weight zero commuting with $\sigma $. 
Then one can easily see that $R\circ \sigma$ is a $\{\sigma , \sigma \}$-Rota-Baxter operator both for 
$(A, \mu )$ and for $(A, \sigma \circ \mu )$. 
\end{remark}

We will be particularly interested in the following two classes of $\{\sigma , \tau\}$-Rota-Baxter operators.
\begin{definition}
Let $A$ be an algebra, $\alpha  :A\rightarrow A$ an algebra map, 
$R:A\rightarrow A$ a linear map commuting with $\alpha $ and $n$ a natural number. 
We call $R$ an $\alpha ^n$-Rota-Baxter operator if it is an $\{\alpha ^n , \alpha ^n\}$-Rota-Baxter operator, i.e. 
\begin{eqnarray}
&&R(\alpha ^n(a))R(\alpha ^n(b))=R(\alpha ^n(a)R(b)+R(a)\alpha ^n(b)), \;\;\;\forall \;\;a, b\in A. \label{alfan}
\end{eqnarray}
\end{definition}

Obviously, an $\alpha ^0$-Rota-Baxter operator is just a usual Rota-Baxter operator of weight zero commuting with $\alpha $.
\begin{remark}
From previous remarks it follows that, if $A$ is an algebra, $\alpha :A\rightarrow A$ 
a bijective algebra map, $D:A\rightarrow A$ a bijective linear map commuting with $\alpha $ and $n$ a natural 
number, then $R:=D^{-1}$ is an $\alpha ^n$-Rota-Baxter operator if and only if $D$ is an 
$(\alpha ^{-n}, \alpha ^{-n})$-derivation.  
\end{remark}
\begin{definition}
Let $(A,\mu ,\alpha ,\beta )$ be a BiHom-associative algebra and $R:A\rightarrow A$ a linear map commuting with 
$\alpha $ and $\beta $. 
We call $R$ an $\alpha \beta $-Rota-Baxter operator if it is an $\{\alpha \beta , \alpha \beta \}$-Rota-Baxter operator, 
that is 
\begin{eqnarray}
&&R(\alpha \beta (a))R(\alpha \beta (b))=R(\alpha \beta (a)R(b)+R(a)\alpha \beta (b)), \;\;\;\forall \;\;a, b\in A. \label{generRB}
\end{eqnarray}
\end{definition}
\begin{theorem} \label{simprop}
Let $(A,\mu ,\alpha ,\beta )$ be a BiHom-associative algebra 
and $\sigma , \tau, \eta ,R:A\rightarrow A$ linear maps such that $\sigma , \tau , \eta $ are algebra maps, $R$ is a 
$\{\sigma , \tau\}$-Rota-Baxter operator and any two of the maps $\alpha , \beta , \sigma , \tau, \eta ,R$ commute. 
Define new operations on $A$ by 
\begin{eqnarray*}
&&x\prec y=\sigma (x)R\eta (y) \;\;\;and\;\;\; x\succ y=R(x)\tau \eta (y), 
\end{eqnarray*} 
for all $x, y\in A$. Then $(A, \prec , \succ , \alpha \sigma , \beta \tau \eta )$ is a BiHom-dendriform algebra. 
\end{theorem}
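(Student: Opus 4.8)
The plan is to verify the three BiHom-dendriform axioms (\ref{BiHomdend6})--(\ref{BiHomdend8}) directly, together with the structural conditions that $\alpha\sigma$ and $\beta\tau\eta$ commute and are multiplicative with respect to $\prec$ and $\succ$. Since $\alpha,\beta,\sigma,\tau,\eta,R$ pairwise commute and $\sigma,\tau,\eta$ are algebra maps (while $R$ is merely linear), the multiplicativity and commutativity of the two new structure maps will follow from a routine bookkeeping argument: for instance, $\alpha\sigma(x\prec y)=\alpha\sigma(\sigma(x)R\eta(y))=\alpha\sigma\sigma(x)\,\alpha\sigma R\eta(y)$ by multiplicativity of the algebra map $\alpha\sigma$, and this equals $(\alpha\sigma x)\prec(\alpha\sigma y)$ after commuting the maps past $R$. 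I would dispatch these preliminary checks first and briefly, since they use only that $\sigma,\tau,\eta$ are algebra maps and everything commutes.

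The heart of the proof is the three dendriform identities, and here the main engine is the defining $\{\sigma,\tau\}$-Rota-Baxter relation (\ref{genRB}), namely $R(\sigma(a))R(\tau(b))=R(\sigma(a)R(b)+R(a)\tau(b))$, combined with BiHom-associativity (\ref{BHassoc}). I would treat (\ref{BiHomdend7}) first, as it is the simplest: expanding $(x\succ y)\prec\beta(z)$ gives $\sigma(R(x)\tau\eta(y))\,R\eta\beta(z)$, and expanding $\alpha(x)\succ(y\prec z)$ gives $R\alpha(x)\,\tau\eta(\sigma(y)R\eta(z))$. After pushing the algebra maps $\sigma,\tau,\eta$ inside the products (using multiplicativity) and moving all maps into a common normal form by commutativity, both sides should reduce to the same expression via a single application of BiHom-associativity; crucially, (\ref{BiHomdend7}) involves no instance of the Rota-Baxter splitting, so it is purely a BiHom-associativity identity once the maps are normalized.

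For (\ref{BiHomdend6}) and (\ref{BiHomdend8}) the Rota-Baxter relation is essential. Consider (\ref{BiHomdend6}): the left side $(x\prec y)\prec\beta(z)=\sigma(\sigma(x)R\eta(y))\,R\eta\beta(z)$, while the right side is $\alpha\sigma(x)\prec(y\prec z+y\succ z)=\sigma\alpha\sigma(x)\,R\eta(\sigma(y)R\eta(z)+R(y)\tau\eta(z))$. The strategy is to recognize the inner sum $\sigma(y)R\eta(z)+R(y)\tau\eta(z)$ (after adjusting by the commuting maps) as exactly the argument appearing on the right of (\ref{genRB}), so that $R$ applied to it becomes a product $R(\sigma(\cdot))R(\tau(\cdot))$; then BiHom-associativity rebracketing on the left side should match. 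The symmetric identity (\ref{BiHomdend8}) is handled the same way, applying (\ref{genRB}) to the sum $x\prec y+x\succ y$. The main obstacle I anticipate is bookkeeping: ensuring that the specific powers and compositions of $\alpha,\beta,\sigma,\tau,\eta$ line up so that (\ref{genRB}) applies with precisely the right arguments $\sigma(a)$ and $\tau(b)$, and that a single clean use of (\ref{BHassoc}) closes the gap. Because all six maps commute, these manipulations are legitimate, but getting the twistings to cancel in exactly the right pattern — so that the Rota-Baxter splitting and the BiHom-associativity reassociation meet — is the delicate step, and I would organize each of the two identities as: expand, push algebra maps inward, apply (\ref{genRB}) to collapse the sum into a product, then apply (\ref{BHassoc}) once to reassociate.
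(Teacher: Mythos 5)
Your proposal is correct and takes essentially the same route as the paper's proof: a direct verification of the three axioms in which (\ref{BiHomdend7}) is closed purely by BiHom-associativity after normalizing the commuting maps (no Rota-Baxter splitting, exactly as you predict), while (\ref{BiHomdend6}) and (\ref{BiHomdend8}) are each closed by a single application of (\ref{genRB}) (with arguments $a=\eta(y)$, $b=\eta^{2}(z)$, respectively $a=x$, $b=\eta(y)$) together with one use of (\ref{BHassoc}). The only cosmetic difference is organizational: the paper runs a one-directional chain from the left-hand side, applying (\ref{genRB}) forward, whereas you expand both sides and meet in the middle, applying (\ref{genRB}) backward on the right-hand side.
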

\begin{proof}
One can see that $\alpha \sigma $ and $\beta \tau \eta $ are multiplicative with respect to $\prec $ and $\succ $.  
We compute:
\begin{eqnarray*}
(x\prec y)\prec \beta \tau \eta (z)&=&(\sigma (x)R\eta (y))\prec \beta \tau \eta (z)
=\sigma (\sigma (x)R\eta (y))R\beta \tau \eta ^2(z)\\
&=&(\sigma^2(x)\sigma R\eta (y))\beta R\tau \eta ^2(z)\\
&\overset{(\ref{BHassoc})}{=}&\alpha \sigma^2(x)(R\sigma \eta (y)R\tau \eta ^2(z))\\
&\overset{(\ref{genRB})}{=}&\alpha \sigma^2(x)R(\sigma \eta (y) R\eta ^2(z)+R\eta (y)\tau \eta ^2(z))\\
&=&\alpha \sigma^2(x)R\eta (\sigma (y) R\eta (z)+R(y)\tau \eta (z))\\
&=&\alpha \sigma (x)\prec (\sigma (y) R\eta (z)+R(y)\tau \eta (z))\\
&=&\alpha \sigma (x)\prec (y\prec z+y\succ z). 
\end{eqnarray*}
Then we compute:
\begin{eqnarray*}
(x\succ y)\prec \beta \tau \eta (z)&=&(R(x)\tau \eta (y))\prec \beta \tau \eta (z)
=\sigma (R(x)\tau \eta (y))R\beta \tau \eta ^2(z)\\
&=&(\sigma R(x)\sigma \tau \eta (y))\beta R\tau \eta ^2(z)\\
&\overset{(\ref{BHassoc})}{=}&\alpha \sigma R(x)(\sigma \tau \eta (y)R\tau \eta ^2(z))
=R\alpha \sigma (x) \tau \eta (\sigma (y)R\eta (z))\\
&=&\alpha \sigma (x)\succ (\sigma (y)R\eta (z))=\alpha \sigma (x)\succ (y\prec z). 
\end{eqnarray*}
Also, by using again (\ref{genRB}), one proves that $\alpha \sigma (x)\succ (y\succ z)=(x\prec y+x\succ y)\succ 
\beta \tau \eta (z)$, finishing the proof. 
\end{proof}

We have some particular cases of this theorem. 
\begin{corollary}\label{RBbihom}
Let $A$ be an associative algebra, $\sigma , \tau:A\rightarrow A$ two commuting algebra maps and $R:A\rightarrow A$ 
a $\{\sigma , \tau \}$-Rota-Baxter operator commuting with $\sigma $ and $\tau $. Define new operations on $A$ by 
$x\prec y=\sigma (x)R(y)$ and $x\succ y=R(x)\tau (y)$, for $x, y\in A$. 
Then $(A, \prec , \succ , \sigma , \tau )$ is a BiHom-dendriform algebra. Moreover, if we consider $(A, *, \sigma , \tau )$ 
the BiHom-associative algebra associated to it as in Proposition \ref{nacer}, then $R$ is a 
morphism of BiHom-associative algebras from $(A, * , \sigma , \tau )$ to $A_{(\sigma , \tau )}$, the Yau 
twist of the associative algebra $A$ via the maps $\sigma $ and $\tau $.  
\end{corollary}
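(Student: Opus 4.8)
The plan is to obtain the first assertion as a direct specialization of Theorem \ref{simprop}, and then to verify the morphism property by a one-line computation from the defining identity (\ref{genRB}).

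For the first part, I would regard the associative algebra $(A, \mu)$ as a BiHom-associative algebra with structure maps $\alpha = \beta = id_A$, and apply Theorem \ref{simprop} with $\eta = id_A$ and with the given $\sigma, \tau, R$. All hypotheses of that theorem are met: $\sigma, \tau, id_A$ are algebra maps, $R$ is a $\{\sigma, \tau\}$-Rota-Baxter operator, and any two of the maps $id_A, id_A, \sigma, \tau, id_A, R$ commute, since $\sigma$ and $\tau$ commute, $R$ commutes with both $\sigma$ and $\tau$ by hypothesis, and $id_A$ commutes with everything. With these choices the operations of Theorem \ref{simprop} specialize to $x \prec y = \sigma(x)R(y)$ and $x \succ y = R(x)\tau(y)$, while the resulting structure maps become $\alpha\sigma = \sigma$ and $\beta\tau\eta = \tau$. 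Hence Theorem \ref{simprop} yields immediately that $(A, \prec, \succ, \sigma, \tau)$ is a BiHom-dendriform algebra.

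For the second part, by Proposition \ref{nacer} the associated BiHom-associative algebra $(A, *, \sigma, \tau)$ has multiplication $x * y = x \prec y + x \succ y = \sigma(x)R(y) + R(x)\tau(y)$, while the Yau twist $A_{(\sigma, \tau)}$ carries the multiplication $(x, y) \mapsto \sigma(x)\tau(y)$ (computed in the original algebra $A$) together with the same structure maps $\sigma, \tau$. Recalling that a morphism of BiHom-associative algebras is a multiplicative linear map intertwining the structure maps, and noting that $R$ commutes with $\sigma$ and $\tau$ by hypothesis, it remains only to check multiplicativity, namely $R(x * y) = \sigma(R(x))\tau(R(y))$ for all $x, y \in A$. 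The key observation is that $x * y = \sigma(x)R(y) + R(x)\tau(y)$ is precisely the argument appearing on the right-hand side of the defining identity (\ref{genRB}) with $a = x$ and $b = y$. Applying (\ref{genRB}) therefore gives $R(x * y) = R(\sigma(x))R(\tau(y))$, and using that $R$ commutes with $\sigma$ and $\tau$ this equals $\sigma(R(x))\tau(R(y))$, which is exactly the product of $R(x)$ and $R(y)$ in $A_{(\sigma, \tau)}$, as required.

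There is essentially no hard step here: once the correct specialization of Theorem \ref{simprop} is identified, the BiHom-dendriform claim is immediate, and the morphism property is a short consequence of the Rota-Baxter identity together with the commutativity hypotheses. The only point demanding a little care is matching the Yau-twisted product of $A_{(\sigma, \tau)}$, which twists the original multiplication by $\sigma \otimes \tau$, with the right-hand side $R(\sigma(x))R(\tau(y))$ produced by (\ref{genRB}); this is exactly where the assumption that $R$ commutes with $\sigma$ and $\tau$ enters.
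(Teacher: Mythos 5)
Your proof is correct and takes essentially the same approach as the paper: the paper's proof is precisely ``Take in Theorem \ref{simprop} $\alpha =\beta =\eta =id_A$'' followed by the remark that the second statement is obvious. Your explicit verification of the morphism property --- applying (\ref{genRB}) to $x*y=\sigma (x)R(y)+R(x)\tau (y)$ and then using that $R$ commutes with $\sigma $ and $\tau $ to match the Yau-twisted product --- is exactly the intended (and correct) argument behind that ``obvious''.
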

\begin{proof}
Take in Theorem \ref{simprop} $\alpha =\beta =\eta =id_A$. The second statement is obvious.  
\end{proof}
\begin{remark}
Assume that we are in the hypotheses of Corollary \ref{RBbihom} and moreover $\sigma $ and $\tau $ are bijective; 
denote $\alpha =\sigma ^{-1}$, $\beta =\tau ^{-1}$. By Remark \ref{Rimp}, $R$ is an $(\alpha , \beta )$-Rota-Baxter operator, 
so, by Remark \ref{brzezinski}, $A$ becomes a dendriform algebra with operations $a\prec b=a\tau ^{-1}(R(b))$ and 
$a\succ b=\sigma ^{-1}(R(a))b$. One can check that the Yau twist of this dendriform algebra via the maps 
$\sigma $ and $\tau $ is exactly the BiHom-dendriform algebra obtained in Corollary \ref{RBbihom}. 
\end{remark}
\begin{corollary}\label{moregendend}
Let $(A, \mu , \alpha )$ be a Hom-associative algebra, $n$ a natural number and $R:A\rightarrow A$ an 
$\alpha ^n$-Rota-Baxter operator. Define new operations on $A$ by 
$x\prec y=\alpha ^n(x)R(y)$ and $x\succ y=R(x)\alpha ^n(y)$, 
for all $x, y\in A$. Then $(A, \prec , \succ , \alpha ^{n+1})$ is a Hom-dendriform algebra. Consequently, by Proposition 
\ref{nacer}, if we define new operations on $A$ by 
\begin{eqnarray*}
&&x*y=x\prec y+x\succ y=\alpha ^n(x)R(y)+R(x)\alpha ^n(y),\\
&&x\circ y=x\succ y-y\prec x=R(x)\alpha ^n(y)-\alpha ^n(y)R(x), 
\end{eqnarray*}
then $(A, *, \alpha ^{n+1})$ is a Hom-associative algebra and $(A, \circ , \alpha ^{n+1})$ is a left Hom-pre-Lie algebra. 
\end{corollary}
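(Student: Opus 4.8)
The plan is to derive this statement as a direct specialization of Theorem \ref{simprop}, viewing the Hom-associative algebra $(A, \mu, \alpha)$ as a BiHom-associative algebra with $\beta = \alpha$. Concretely, in Theorem \ref{simprop} I would set $\sigma = \tau = \alpha^n$ and $\eta = id_A$. The hypothesis that $R$ is an $\alpha^n$-Rota-Baxter operator says precisely that $R$ is an $\{\alpha^n, \alpha^n\}$-Rota-Baxter operator commuting with $\alpha$, so $R$ is a $\{\sigma, \tau\}$-Rota-Baxter operator as required.

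Next I would check the remaining hypotheses of Theorem \ref{simprop}. The maps $\sigma = \tau = \alpha^n$ and $\eta = id_A$ are algebra maps because $\alpha$ is multiplicative and the identity is trivially so. Since $R$ commutes with $\alpha$ by assumption, every pair among $\alpha$, $\beta = \alpha$, $\sigma = \alpha^n$, $\tau = \alpha^n$, $\eta = id_A$, $R$ commutes, so the commutativity condition holds. With these choices the operations of Theorem \ref{simprop} become $x\prec y = \sigma(x) R\eta(y) = \alpha^n(x)R(y)$ and $x\succ y = R(x)\tau\eta(y) = R(x)\alpha^n(y)$, matching the operations in the statement.

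Applying Theorem \ref{simprop} then yields a BiHom-dendriform algebra with structure maps $\alpha\sigma = \alpha^{n+1}$ and $\beta\tau\eta = \alpha \cdot \alpha^n \cdot id_A = \alpha^{n+1}$. Since both structure maps coincide, this is in fact a Hom-dendriform algebra $(A, \prec, \succ, \alpha^{n+1})$, which is the first assertion. The remaining assertions about $*$ and $\circ$ then follow by invoking the second part of Proposition \ref{nacer} (the case of equal structure maps) applied to this Hom-dendriform algebra: the operation $x*y = x\prec y + x\succ y$ gives the Hom-associative algebra $(A, *, \alpha^{n+1})$, and $x\circ y = x\succ y - y\prec x = R(x)\alpha^n(y) - \alpha^n(y)R(x)$ gives the left Hom-pre-Lie algebra $(A, \circ, \alpha^{n+1})$.

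Since the whole argument is a substitution into an already-proved theorem, there is essentially no computational obstacle; the only point requiring care is confirming that the choice $\eta = id_A$ and $\sigma = \tau = \alpha^n$ correctly reproduces both the prescribed operations and the structure map $\alpha^{n+1}$, and that the commutativity requirement of Theorem \ref{simprop} is met --- both of which reduce to the commutation of $R$ with $\alpha$.
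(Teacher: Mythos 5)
Your proposal is correct and follows exactly the paper's own route: the paper's entire proof is ``Take in Theorem \ref{simprop} $\alpha=\beta$, $\sigma=\tau=\alpha^n$, $\eta=id_A$,'' with the $*$ and $\circ$ claims delegated to Proposition \ref{nacer} just as you do. Your additional verification that the hypotheses of Theorem \ref{simprop} (algebra maps, pairwise commutation, matching operations and structure maps) are satisfied is a careful spelling-out of what the paper leaves implicit.
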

\begin{proof}
Take in Theorem \ref{simprop} $\alpha =\beta $, $\sigma =\tau =\alpha ^n$, $\eta =id_A$. 
\end{proof}
\begin{corollary}\label{alphabetaRB}
Let $(A,\mu ,\alpha ,\beta )$ be a BiHom-associative algebra and $R:A\rightarrow A$ an $\alpha \beta $-Rota-Baxter operator. 
Let $\eta :A\rightarrow A$ be an algebra map 
commuting with $\alpha $, $\beta $ and $R$. 
Define new operations on $A$ by 
$x\prec y=\alpha \beta (x)R\eta (y)$  and $x\succ y=R(x)\alpha \beta \eta (y)$, 
for all $x, y\in A$. Then $(A, \prec , \succ , \alpha ^2\beta , \alpha \beta ^2\eta )$ is a BiHom-dendriform algebra. 
\end{corollary}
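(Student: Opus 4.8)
The plan is to obtain this corollary as a direct specialization of Theorem \ref{simprop}, taking $\sigma=\tau=\alpha\beta$ there while keeping $\eta$ as the given map. First I would check that this substitution reproduces exactly the operations in the statement: with $\sigma=\alpha\beta$ one has $x\prec y=\sigma(x)R\eta(y)=\alpha\beta(x)R\eta(y)$, and with $\tau=\alpha\beta$ one has $x\succ y=R(x)\tau\eta(y)=R(x)\alpha\beta\eta(y)$, matching the corollary. Likewise, the structure maps produced by the theorem are $\alpha\sigma=\alpha^2\beta$ and $\beta\tau\eta=\alpha\beta^2\eta$ (using $\beta\alpha=\alpha\beta$), again as claimed.

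Next I would verify that the hypotheses of Theorem \ref{simprop} hold under this choice. The map $\sigma=\tau=\alpha\beta$ is an algebra map because $\alpha$ and $\beta$ are: as structure maps of a BiHom-associative algebra they satisfy $\alpha(xy)=\alpha(x)\alpha(y)$ and $\beta(xy)=\beta(x)\beta(y)$ and commute with each other, so their composite is multiplicative; and $\eta$ is an algebra map by assumption. The requirement that $R$ be a $\{\sigma,\tau\}$-Rota-Baxter operator is precisely the statement that $R$ is an $\alpha\beta$-Rota-Baxter operator, which is our hypothesis.

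The only point needing care is the pairwise-commutativity condition on the six maps $\alpha,\beta,\sigma,\tau,\eta,R$. Since $\sigma$ and $\tau$ coincide and equal $\alpha\beta=\beta\alpha$, it suffices to observe that $\alpha\beta$ commutes with each of $\alpha$, $\beta$, $\eta$, $R$. Commutativity with $\alpha$ and $\beta$ is immediate from $\alpha\beta=\beta\alpha$; the definition of an $\alpha\beta$-Rota-Baxter operator already requires $R$ to commute with both $\alpha$ and $\beta$, hence with $\alpha\beta$; and $\eta$ commutes with $\alpha$, $\beta$ and $R$ by hypothesis, hence with $\alpha\beta$. Together with the assumed commutation of $\eta$ with $R$, this yields pairwise commutativity of all six maps. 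All hypotheses of Theorem \ref{simprop} are then met, and the conclusion that $(A,\prec,\succ,\alpha^2\beta,\alpha\beta^2\eta)$ is a BiHom-dendriform algebra follows at once. The substitution being immediate, there is no genuine obstacle; the bookkeeping of the commutation relations in the previous paragraph is the only item to be checked with any attention.
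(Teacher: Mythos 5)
Your proposal is correct and is exactly the paper's own proof: the paper simply takes $\sigma=\tau=\alpha\beta$ in Theorem \ref{simprop}, and your verification of the operations, structure maps, and commutation hypotheses spells out the routine checks the paper leaves implicit.
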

\begin{proof}
Take in Theorem \ref{simprop} $\sigma =\tau =\alpha \beta $. 
\end{proof}

We recall from \cite{hls} that a Hom-Lie algebra is a triple $\left( L,
\left[\cdot , \cdot \right] ,\alpha \right) $ in which $L$ is a linear
space, $\alpha :L\rightarrow L$ is a linear map and $\left[\cdot , \cdot \right]
:L\times L\rightarrow L$ is a bilinear map,
such that, for all $x, y, z\in L:$
\begin{eqnarray*}
&&\alpha (\left[ x, y \right])=\left[ \alpha \left( x\right),\alpha \left( y \right) \right], \\
&&\left[ x, y \right] =-
\left[ y, x \right],
\;\;\;\; \text{ (skew-symmetry)} \\
&&\left[ \alpha\left( x\right) ,\left[ y, z\right] \right] +\left[ \alpha 
\left( y\right) ,\left[ z, x \right] \right] +\left[ \alpha \left( z\right) ,\left[ x, y \right] \right] =0. 
\text{ (Hom-Jacobi condition)}
\end{eqnarray*}
\begin{proposition}\label{analogLie}
Let $(L, [\cdot , \cdot ], \alpha )$ be a Hom-Lie algebra and $R:L\rightarrow L$ an $\alpha ^n$-Rota-Baxter operator, 
i.e. $R$ commutes with $\alpha $ and 
\begin{eqnarray}
&&[R(\alpha ^n(a)), R(\alpha ^n(b))]=R([\alpha ^n(a), R(b)]+[R(a), \alpha ^n(b)]), \;\;\;\forall \;\;a, b\in L. 
\label{liealfan}
\end{eqnarray}
Then $(L, \cdot , \alpha ^{n+1})$ 
is a left Hom-pre-Lie algebra, where  $a\cdot b=[R(a), \alpha ^n(b)]$, for all $a, b\in L$. 
\end{proposition}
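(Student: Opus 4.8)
The plan is to verify directly the two defining properties of a left Hom-pre-Lie algebra for the triple $(L,\cdot,\alpha^{n+1})$, where $a\cdot b=[R(a),\alpha^n(b)]$. Multiplicativity of the structure map is routine: since $\alpha$ is an endomorphism of the bracket and commutes with $R$, both $\alpha^{n+1}(a\cdot b)$ and $\alpha^{n+1}(a)\cdot\alpha^{n+1}(b)$ reduce to $[R\alpha^{n+1}(a),\alpha^{2n+1}(b)]$. The real content is the left Hom-pre-Lie identity, which I would rewrite as
$$A(x,y,z)-A(y,x,z)=B(x,y,z)-B(y,x,z),$$
where $A(x,y,z):=\alpha^{n+1}(x)\cdot(y\cdot z)$ and $B(x,y,z):=(x\cdot y)\cdot\alpha^{n+1}(z)$. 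Throughout I would write $p=R\alpha^n(x)$, $q=R\alpha^n(y)$ and $w=\alpha^{2n}(z)$ (so that $\alpha(w)=\alpha^{2n+1}(z)$), and aim to prove that both differences equal the single expression $[[p,q],\alpha(w)]$.

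For the first difference I would expand using $a\cdot b=[R(a),\alpha^n(b)]$ together with the fact that $\alpha$ is a bracket morphism commuting with $R$; this gives $A(x,y,z)=[\alpha(p),[q,w]]$ and $A(y,x,z)=[\alpha(q),[p,w]]$. Applying skew-symmetry to turn $[p,w]$ into $-[w,p]$ and then invoking the Hom-Jacobi condition on the triple $(p,q,w)$ collapses $A(x,y,z)-A(y,x,z)$ to $[[p,q],\alpha(w)]$. This step uses only the Hom-Lie axioms.

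For the second difference I would first write $B(x,y,z)=[R[R(x),\alpha^n(y)],\alpha^{2n+1}(z)]$ and use the Rota-Baxter relation (\ref{liealfan}) to replace $R[R(x),\alpha^n(y)]$ by $[p,q]-R[\alpha^n(x),R(y)]$, so that $B(x,y,z)=[[p,q],\alpha(w)]-C(x,y,z)$ with $C(x,y,z):=[R[\alpha^n(x),R(y)],\alpha^{2n+1}(z)]$. Since $[p,q]$ is skew in $x,y$, the leading terms contribute $2[[p,q],\alpha(w)]$ to $B(x,y,z)-B(y,x,z)$, so it remains to show $C(x,y,z)-C(y,x,z)=[[p,q],\alpha(w)]$. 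This I would obtain by rewriting $[\alpha^n(x),R(y)]=-[R(y),\alpha^n(x)]$ and applying (\ref{liealfan}) a second time, now with the roles of $x$ and $y$ exchanged; the outcome is $C(x,y,z)=[[p,q],\alpha(w)]+C(y,x,z)$, which gives exactly the required relation. Combining, $B(x,y,z)-B(y,x,z)=2[[p,q],\alpha(w)]-[[p,q],\alpha(w)]=[[p,q],\alpha(w)]$, matching the first difference.

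The main obstacle is purely organizational: keeping careful track of the powers of $\alpha$ and the precise placement of the structure map inside the nested brackets, and invoking the Rota-Baxter relation (\ref{liealfan}) with the correct pairing of arguments --- once directly and once after a skew-symmetry flip. No conceptual difficulty arises beyond this bookkeeping: the Hom-Jacobi condition disposes of the left-associated side while the Rota-Baxter identity disposes of the right-associated side, and both funnel into the common term $[[p,q],\alpha(w)]$.
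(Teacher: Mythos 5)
Your proof is correct, and it rests on the same two pillars as the paper's: the identity $[\alpha(a),[b,c]]-[\alpha(b),[a,c]]=[[a,b],\alpha(c)]$ (the paper's (\ref{HL}), derived from Hom-Jacobi plus skew-symmetry, which is exactly your first-difference step) and the Rota-Baxter relation (\ref{liealfan}) used to expand $R([R(x),\alpha^n(y)])$. What differs is the organization. The paper runs a single chain of equalities turning $\alpha^{n+1}(x)\cdot(y\cdot z)-(x\cdot y)\cdot\alpha^{n+1}(z)$ directly into the same expression with $x$ and $y$ interchanged, invoking (\ref{liealfan}) exactly once and finishing with a skew-symmetry flip; you instead symmetrize in $x,y$ and show that the left-associated difference and the right-associated difference both equal the pivot $[[p,q],\alpha(w)]$. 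Your version makes the division of labor explicit (Hom-Jacobi disposes of one side, Rota-Baxter of the other), at the cost of a small redundancy: your second invocation of (\ref{liealfan}) is not needed, since skew-symmetry alone gives $C(y,x,z)=-B(x,y,z)$, whence $C(x,y,z)-C(y,x,z)=C(x,y,z)+B(x,y,z)=[[p,q],\alpha(w)]$ follows already from your first invocation. Either way the argument is sound, and the bookkeeping of powers of $\alpha$ (using $R\alpha=\alpha R$ to express everything in terms of $p$, $q$, $w$) is handled correctly.
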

\begin{proof}
Obviously, we have $\alpha ^{n+1}(a\cdot b)=\alpha ^{n+1}(a)\cdot \alpha ^{n+1}(b)$, for all $a, b\in A$. Note that the 
Hom-Jacobi identity together with the skew-symmetry of the bracket $[\cdot , \cdot ]$ imply 
\begin{eqnarray}
&&[\alpha (a), [b, c]]=[[a, b], \alpha (c)]+[\alpha (b), [a, c]], \;\;\;\forall \;\;a, b, c\in A. \label{HL}
\end{eqnarray}
Now for $x, y, z\in A$ we compute: \\[2mm]
${\;\;\;\;\;}$
$\alpha ^{n+1}(x)\cdot (y\cdot z)-(x\cdot y)\cdot \alpha ^{n+1}(z)$
\begin{eqnarray*}
&=&\alpha ^{n+1}(x)\cdot [R(y), \alpha ^n(z)]-[R(x), \alpha ^n(y)]\cdot \alpha ^{n+1}(z)\\
&=&[R(\alpha ^{n+1}(x)), [\alpha ^n(R(y)), \alpha ^{2n}(z)]]-
[R([R(x), \alpha ^n(y)]), \alpha ^{2n+1}(z)]\\
&\overset{(\ref{liealfan})}{=}&[R(\alpha ^{n+1}(x)), [\alpha ^n(R(y)), \alpha ^{2n}(z)]]-
[[R(\alpha ^n(x)), R(\alpha ^n(y))], \alpha ^{2n+1}(z)]\\
&&+[R([\alpha ^n(x), R(y)]), \alpha ^{2n+1}(z)]\\
&=&[R(\alpha ^{n+1}(x)), [R(\alpha ^n(y)), \alpha ^{2n}(z)]]-
[[\alpha ^n(R(x)), R(\alpha ^n(y))], \alpha ^{2n+1}(z)]\\
&&+[R([\alpha ^n(x), R(y)]), \alpha ^{2n+1}(z)]\\
&\overset{(\ref{HL})}{=}&[R(\alpha ^{n+1}(x)), [R(\alpha ^n(y)), \alpha ^{2n}(z)]]-
[\alpha ^{n+1}(R(x)), [R(\alpha ^n(y)), \alpha ^{2n}(z)]]\\
&&+[\alpha ^{n+1}(R(y)), [\alpha ^n(R(x)), \alpha ^{2n}(z)]]+
[R([\alpha ^n(x), R(y)]), \alpha ^{2n+1}(z)]\\
&\overset{skew-symmetry}{=}&[\alpha ^{n+1}(R(y)), [\alpha ^n(R(x)), \alpha ^{2n}(z)]]-
[R([R(y), \alpha ^n(x)]), \alpha ^{2n+1}(z)]\\
&=&\alpha ^{n+1}(y)\cdot (x\cdot z)-(y\cdot x)\cdot \alpha ^{n+1}(z),
\end{eqnarray*}
finishing the proof. 
\end{proof}
\section{The associative BiHom-Yang-Baxter equation}\label{sec3}
\setcounter{equation}{0}
In this section we introduce the associative BiHom-Yang-Baxter equation, generalizing the associative Yang-Baxter equation introduced by Aguiar as well as the associative Hom-Yang-Baxter equation introduced by Yau. Moreover, we discuss its connection with the generalized Rota-Baxter operators introduced in Section \ref{sec2}. 
\begin{definition}
Let $(A,\mu ,\alpha ,\beta )$ be a BiHom-associative algebra and $r=\sum _ix_i\otimes y_i\in A\otimes A$ such that 
$(\alpha \otimes \alpha )(r)=r=(\beta \otimes \beta )(r)$. We define the following elements in $A\otimes A\otimes A$:
\begin{eqnarray*}
&&r_{12}r_{23}=\sum _{i, j}\alpha (x_i)\otimes y_ix_j\otimes \beta (y_j), \;\;\;\;\;
r_{13}r_{12}=\sum _{i, j}x_ix_j\otimes \beta (y_j)\otimes \beta (y_i), \\
&&r_{23}r_{13}=\sum _{i, j}\alpha (x_i)\otimes \alpha (x_j)\otimes y_jy_i, \;\;\;\;\;
A(r)=r_{13}r_{12}-r_{12}r_{23}+r_{23}r_{13}.
\end{eqnarray*}
We say that $r$ is a solution of the associative BiHom-Yang-Baxter equation if $A(r)=0$, i.e. 
\begin{eqnarray}
&&\sum _{i, j}\alpha (x_i)\otimes y_ix_j\otimes \beta (y_j)=\sum _{i, j}x_ix_j\otimes \beta (y_j)\otimes \beta (y_i)+
\sum _{i, j}\alpha (x_i)\otimes \alpha (x_j)\otimes y_jy_i. \label{BHAYBE2}
\end{eqnarray}
\end{definition}
\begin{remark}
Obviously, for $\alpha =\beta $ the associative BiHom-Yang-Baxter equation reduces to the associative 
Hom-Yang-Baxter equation (\ref{AHYBE}). 
\end{remark}
\begin{remark}
Assume that the BiHom-associative algebra $A$ in the previous definition has a unit, that is (see \cite{gmmp}) an 
element $1_A\in A$ satisfying the conditions $\alpha (1_A)=\beta (1_A)=1_A$, $a 1_A=\alpha (a)$ and 
$1_Aa=\beta (a)$, for all $a\in A$. Then, by using the unit $1_A$, one can define the elements 
$r_{12}, r_{13}, r_{23}\in A\otimes A\otimes A$ by $r_{12}=\sum _ix_i\otimes y_i\otimes 1_A$, 
$r_{13}=\sum _ix_i\otimes 1_A\otimes y_i$ and $r_{23}=\sum _i1_A\otimes x_i\otimes y_i$. 
Then, the element $r_{12}r_{23}$ defined above is just the product between $r_{12}$ and $r_{23}$ in 
$A\otimes A\otimes A$, but the element $r_{13}r_{12}$ is \textbf{not} the product between $r_{13}$ and $r_{12}$ 
(which is $\sum _{i, j}x_ix_j\otimes \beta (y_j)\otimes \alpha (y_i)$), and similarly the element 
$r_{23}r_{13}$ is \textbf{not} the product between $r_{23}$ and $r_{13}$. 
\end{remark}
\begin{theorem}\label{ABRB}
Let $(A,\mu ,\alpha ,\beta )$ be a BiHom-associative algebra and $r=\sum _ix_i\otimes y_i\in A\otimes A$ such that $(\alpha \otimes \alpha )(r)=r=(\beta \otimes \beta )(r)$ 
and $r$ is a solution of the associative BiHom-Yang-Baxter equation. Define the linear map 
\begin{eqnarray}
&&R:A\rightarrow A, \;\;\;R(a)=\sum _i\alpha \beta ^3(x_i)(a\alpha ^3(y_i))=
\sum _i(\beta ^3(x_i)a)\alpha ^3\beta (y_i), \;\;\;\;\;\forall \;\;a\in A. \label{RAB}
\end{eqnarray}
Then $R$ is an $\alpha \beta $-Rota-Baxter operator. 
\end{theorem}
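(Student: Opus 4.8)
The plan is to establish the result in two stages: first verify that $R$ commutes with $\alpha$ and $\beta$ (which is part of what it means for $R$ to be an $\alpha\beta$-Rota-Baxter operator), and then derive identity (\ref{generRB}) by feeding the associative BiHom-Yang-Baxter equation (\ref{BHAYBE2}) into a suitable trilinear expression.

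For the first stage, I would use that $\alpha$ and $\beta$ are algebra maps together with the invariance of $r$. Iterating $(\alpha\otimes\alpha)(r)=r$ gives $(\alpha^k\otimes\alpha^k)(r)=r$ for every $k$, and likewise for $\beta$; applying the linear map $\alpha\beta^3\otimes\alpha^3$ to these identities yields
\begin{eqnarray*}
&&\sum_i\alpha^2\beta^3(x_i)\otimes\alpha^4(y_i)=\sum_i\alpha\beta^3(x_i)\otimes\alpha^3(y_i)=\sum_i\alpha\beta^4(x_i)\otimes\alpha^3\beta(y_i).
\end{eqnarray*}
Since $\alpha$ is multiplicative, $\alpha(R(a))=\sum_i\alpha^2\beta^3(x_i)(\alpha(a)\alpha^4(y_i))$, and substituting the first equality above into this bilinear expression gives $\alpha(R(a))=R(\alpha(a))$; the relation $\beta(R(a))=R(\beta(a))$ follows symmetrically from the second equality.

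For the second stage, the idea is the BiHom-analogue of Aguiar's observation: I would define, for fixed $a,b\in A$, a trilinear map $T_{a,b}\colon A\otimes A\otimes A\rightarrow A$ which inserts $a$ after the first tensor leg and $b$ after the second (with structure maps dictated by (\ref{RAB})), and then show
\begin{eqnarray*}
&&T_{a,b}(r_{12}r_{23})=R(\alpha\beta(a))R(\alpha\beta(b)),\quad T_{a,b}(r_{13}r_{12})=R(R(a)\alpha\beta(b)),\quad T_{a,b}(r_{23}r_{13})=R(\alpha\beta(a)R(b)).
\end{eqnarray*}
Granting these, applying $T_{a,b}$ to $A(r)=r_{13}r_{12}-r_{12}r_{23}+r_{23}r_{13}=0$ immediately produces $R(\alpha\beta(a))R(\alpha\beta(b))=R(\alpha\beta(a)R(b))+R(R(a)\alpha\beta(b))=R(\alpha\beta(a)R(b)+R(a)\alpha\beta(b))$, which is exactly (\ref{generRB}). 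The correspondence mirrors the classical case: the middle leg $y_ix_j$ of $r_{12}r_{23}$ matches the interior product created when the two factors of $R(\alpha\beta(a))R(\alpha\beta(b))$ are multiplied, while the interior products $x_ix_j$ and $y_jy_i$ of $r_{13}r_{12}$ and $r_{23}r_{13}$ match the interior products arising inside $R(R(a)\alpha\beta(b))$ and $R(\alpha\beta(a)R(b))$ respectively.

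The main obstacle is the verification of these three identities, which is a substantial reassociation computation: each of the expressions $R(\alpha\beta(a))R(\alpha\beta(b))$, $R(R(a)\alpha\beta(b))$ and $R(\alpha\beta(a)R(b))$ is a product of terms that are themselves products, so expanding them produces bracketings of six factors that must be rebracketed into the required three-leg form by repeated use of the BiHom-associativity condition (\ref{BHassoc}), with the invariance relations used to shift structure-map powers between the two legs of $r$. The delicate point is the exact bookkeeping of the powers of $\alpha$ and $\beta$: the specific exponents $\alpha\beta^3$ and $\alpha^3$ in (\ref{RAB}) are engineered so that, after all reassociations, the factors $y_i$ and $x_j$ end up adjacent and carrying zero net structure-map power, thereby reproducing the leg $y_ix_j$ of $r_{12}r_{23}$ (and symmetrically for the other two terms); a different choice of exponents would fail to align with the legs appearing in (\ref{BHAYBE2}). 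Throughout, the two equivalent forms of $R$ recorded in (\ref{RAB}), which are related by (\ref{BHassoc}), let one freely switch between $\alpha(u)(vw)$ and $(uv)\beta(w)$ during the manipulations.
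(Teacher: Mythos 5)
Your strategy coincides with the one the paper itself follows: there, too, the proof consists of checking that $R$ commutes with $\alpha $ and $\beta $ (which the paper merely asserts, and which your first stage establishes correctly and in full detail), and then expanding $R(\alpha \beta (a))R(\alpha \beta (b))$ via (\ref{BHassoc}) and the invariance of $r$ until (\ref{BHAYBE2}) can be applied, after which the two resulting terms are matched against the expansions of $R(R(a)\alpha \beta (b))$ and $R(\alpha \beta (a)R(b))$ --- exactly your three-identity scheme, with the same correspondence of terms.

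The gap is that your second stage is a plan rather than a proof: the trilinear map $T_{a,b}$ is never written down, and in the BiHom setting writing it down is the heart of the matter. Saying that it ``inserts $a$ after the first tensor leg and $b$ after the second, with structure maps dictated by (\ref{RAB})'' does not determine a map: with no associativity available, one must commit to a specific bracketing of the five factors and to specific powers of $\alpha $ and $\beta $ on each of them, and the assertion that a \emph{single} such choice satisfies all three of your identities simultaneously is precisely what has to be proved; it cannot be transported from the classical case, where all bracketings coincide and no structure maps appear. The choice that works, which can be extracted from the paper's computation, is
\begin{eqnarray*}
&&T_{a,b}(u\otimes v\otimes w)=\{(\beta ^4(u)\,\alpha \beta (a))\,\alpha ^3\beta ^2(v)\}\,\{\alpha \beta ^2(b)\,\alpha ^5(w)\},
\end{eqnarray*}
and verifying your three identities for this $T_{a,b}$ is exactly the two long chains of (\ref{BHassoc})-reassociations and invariance shifts that constitute the paper's proof. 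Note in particular that the required maneuvers are different for the different terms: to get $T_{a,b}(r_{13}r_{12})=R(R(a)\alpha \beta (b))$ one must rebracket so that the two $x$-factors become adjacent (producing the leg $x_ix_j$), whereas for $T_{a,b}(r_{23}r_{13})=R(\alpha \beta (a)R(b))$ one must instead make the two $y$-factors adjacent (producing the leg $y_jy_i$), and the same fixed insertion pattern has to emerge from both; this simultaneous compatibility is not evident a priori. By labelling all of this ``a substantial reassociation computation'' and deferring it, your proposal identifies the correct skeleton but omits the content that makes the theorem true --- in particular, nothing in your text certifies that the exponents chosen in (\ref{RAB}) are indeed the ones for which the bookkeeping closes up, which is the one genuinely nontrivial claim.
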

\begin{proof}
The fact that $R$ commutes with $\alpha $ and $\beta $ follows immediately from the fact that 
$(\alpha \otimes \alpha )(r)=r=(\beta \otimes \beta )(r)$. Now we compute, for $a, b\in A$: 
\begin{eqnarray*}
R(\alpha \beta (a))R(\alpha \beta (b))
&=&\{\sum _i(\beta ^3(x_i) \alpha \beta (a)) \alpha ^3\beta (y_i)\}
\{\sum _j\alpha \beta ^3(x_j) (\alpha \beta (b) \alpha ^3(y_j))\}\\
&\overset{(\alpha \otimes \alpha )(r)=r}{=}&\{\sum _i(\alpha \beta ^3(x_i) \alpha \beta (a)) \alpha ^4\beta (y_i)\}
\{\sum _j\alpha \beta ^3(x_j) (\alpha \beta (b) \alpha ^3(y_j))\}\\
&\overset{(\ref{BHassoc})}{=}&\sum _{i, j}\{\{(\beta ^3(x_i) \beta (a)) \alpha ^3\beta (y_i)\}
\alpha \beta ^3(x_j)\} \{\alpha \beta ^2(b) \alpha ^3\beta (y_j)\}\\
&\overset{(\ref{BHassoc})}{=}&\sum _{i, j}\{(\alpha \beta ^3(x_i) \alpha \beta (a)) (\alpha ^3\beta (y_i)
\alpha \beta ^2(x_j))\} \{\alpha \beta ^2(b) \alpha ^3\beta (y_j)\}\\
&\overset{(\beta \otimes \beta )(r)=r}{=}&\sum _{i, j}\{(\alpha \beta ^4(x_i) \alpha \beta (a))
(\alpha ^3\beta ^2(y_i)
\alpha \beta ^2(x_j))\} \{\alpha \beta ^2(b) \alpha ^3\beta (y_j)\}\\
&\overset{(\alpha ^2\otimes \alpha ^2)(r)=r}{=}&\sum _{i, j}\{(\alpha \beta ^4(x_i) \alpha \beta (a))
\alpha ^3\beta ^2(y_i
x_j)\} \{\alpha \beta ^2(b) \alpha ^5\beta (y_j)\}\\
&\overset{(\ref{BHAYBE2})}{=}&\sum _{i, j}\{(\beta ^4(x_i x_j) \alpha \beta (a))
\alpha ^3\beta ^3(y_j)\} \{\alpha \beta ^2(b) \alpha ^5\beta (y_i)\}\\
&&+\sum _{i, j}\{(\alpha \beta ^4(x_i) \alpha \beta (a)) 
\alpha ^4\beta ^2(x_j)\} \{\alpha \beta ^2(b) \alpha ^5(y_j y_i)\}\\
&\overset{(\ref{BHassoc})}{=}&\sum _{i, j}\{\alpha \beta ^4(x_i x_j) (\alpha \beta (a) 
\alpha ^3\beta ^2(y_j))\} \{\alpha \beta ^2(b) \alpha ^5\beta (y_i)\}\\
&&+\sum _{i, j}\{(\alpha \beta ^4(x_i) \alpha \beta (a))
\alpha ^4\beta ^2(x_j)\} \{\alpha \beta ^2(b) \alpha ^5(y_j y_i)\}\\
&=&\sum _{i, j}\{(\beta ^3(x_i) \beta ^2(x_j)) (\alpha \beta (a) 
\alpha ^2(y_j))\} \{\alpha \beta ^2(b) \alpha ^4(y_i)\}\\
&&+\sum _{i, j}\{(\beta ^4(x_i) \alpha \beta (a)) 
\beta ^2(x_j)\} \{\alpha \beta ^2(b) (\alpha (y_j) \alpha ^4(y_i))\}, 
\end{eqnarray*}
where for the last equality we used the identities 
$\sum _i \alpha \beta ^4(x_i)\otimes \alpha ^5\beta (y_i)=\sum _i \beta ^3(x_i)\otimes \alpha ^4(y_i)$ and 
$\sum _j \alpha \beta ^4(x_j)\otimes \alpha ^3\beta ^2(y_j)=\sum _j \beta ^2(x_j)\otimes \alpha ^2(y_j)$, 
for the first term, and 
$\sum _i \alpha \beta ^4(x_i)\otimes \alpha ^5(y_i)=\sum _i\beta ^4(x_i)\otimes \alpha ^4(y_i)$ and 
$\sum _j \alpha ^4\beta ^2(x_j)\otimes \alpha ^5(y_j)=\sum _j \beta ^2(x_j)\otimes \alpha (y_j)$, 
for the second term, identities that are consequences of the relation $(\alpha \otimes \alpha )(r)=r=(\beta \otimes \beta )(r)$. 
On the other hand, we have: \\[2mm]
${\;\;\;\;\;}$$R(R(a) \alpha \beta (b)+\alpha \beta (a) R(b))$
\begin{eqnarray*}
&=&R(\sum _j\{\alpha \beta ^3(x_j) (a \alpha ^3(y_j))\} \alpha \beta (b))+
R(\alpha \beta (a) \{\sum _j \alpha \beta ^3(x_j) (b \alpha ^3(y_j))\})\\
&=&\sum _{i, j}\alpha \beta ^3(x_i) \{\{(\alpha \beta ^3(x_j) (a \alpha ^3(y_j))) \alpha \beta (b)\}
 \alpha ^3(y_i)\}\\
&&+\sum _{i, j}\alpha \beta ^3 (x_i) \{\{\alpha \beta (a) (\alpha \beta ^3(x_j) (b \alpha ^3(y_j)))\}
 \alpha ^3(y_i)\}\\
&\overset{(\beta \otimes \beta )(r)=r}{=}&\sum _{i, j}\alpha \beta ^4(x_i) \{\{(\alpha \beta ^3(x_j) (a \alpha ^3(y_j))) \alpha \beta (b)\}\alpha ^3\beta (y_i)\}\\
&&+\sum _{i, j}\alpha \beta ^3 (x_i) \{\{\alpha \beta (a) (\alpha \beta ^3(x_j) (b \alpha ^3(y_j)))\}
 \alpha ^3(y_i)\}\\
&\overset{(\ref{BHassoc})}{=}&\sum _{i, j}\alpha \beta ^4(x_i) \{\{\alpha ^2\beta ^3(x_j) (\alpha (a)
\alpha ^4(y_j))\} \{\alpha \beta (b) \alpha ^3(y_i)\}\}\\
&&+\sum _{i, j}\alpha \beta ^3 (x_i) \{\{(\beta (a) \alpha \beta ^3(x_j)) (\beta (b) \alpha ^3\beta (y_j))\}
 \alpha ^3(y_i)\}\\
&\overset{(\ref{BHassoc})}{=}&\sum _{i, j}\{\beta ^4(x_i) \{\alpha ^2\beta ^3(x_j) (\alpha (a) 
\alpha ^4(y_j))\}\} \{\alpha \beta ^2(b) \alpha ^3\beta (y_i)\}\\
&&+\sum _{i, j}\alpha \beta ^3 (x_i) \{\{(\beta (a) \alpha \beta ^3(x_j)) (\beta (b) \alpha ^3\beta (y_j))\}
 \alpha ^3(y_i)\}\\
&\overset{(\beta \otimes \beta )(r)=r}{=}&\sum _{i, j}\{\beta ^4(x_i)\{\alpha ^2\beta ^3(x_j)(\alpha (a)
\alpha ^4(y_j))\}\}\{\alpha \beta ^2(b) \alpha ^3\beta (y_i)\}\\
&&+\sum _{i, j}\alpha \beta ^4 (x_i) \{\{(\beta (a) \alpha \beta ^3(x_j)) (\beta (b) \alpha ^3\beta (y_j))\}
 \alpha ^3\beta (y_i)\}\\
&\overset{(\ref{BHassoc})}{=}&\sum _{i, j}\{\beta ^4(x_i) \{\alpha ^2\beta ^3(x_j) (\alpha (a) 
\alpha ^4(y_j))\}\} \{\alpha \beta ^2(b) \alpha ^3\beta (y_i)\}\\
&&+\sum _{i, j}\alpha \beta ^4 (x_i) \{(\alpha \beta (a) \alpha ^2\beta ^3(x_j))
\{(\beta (b) \alpha ^3\beta (y_j))
\alpha ^3(y_i)\}\}\\
&\overset{(\alpha \otimes \alpha )(r)=r}{=}&\sum _{i, j}\{\alpha \beta ^4(x_i) \{\alpha ^2\beta ^3(x_j)
 (\alpha (a) 
\alpha ^4(y_j))\}\} \{\alpha \beta ^2(b) \alpha ^4\beta (y_i)\}\\
&&+\sum _{i, j}\alpha \beta ^4 (x_i) \{(\alpha \beta (a) \alpha ^2\beta ^3(x_j)) 
\{(\beta (b) \alpha ^3\beta (y_j))
 \alpha ^3(y_i)\}\}\\
&\overset{(\ref{BHassoc})}{=}&\sum _{i, j}\{(\beta ^4(x_i) \alpha ^2\beta ^3(x_j))
 (\alpha \beta (a)
\alpha ^4\beta (y_j))\} \{\alpha \beta ^2(b) \alpha ^4\beta (y_i)\}\\
&&+\sum _{i, j}\{\beta ^4 (x_i) (\alpha \beta (a) \alpha ^2\beta ^3(x_j))\}
\{(\beta ^2(b) \alpha ^3\beta ^2(y_j))
 \alpha ^3\beta (y_i)\}\\
&\overset{(\alpha \otimes \alpha )(r)=r}{=}&\sum _{i, j}\{(\beta ^4(x_i) \alpha ^2\beta ^3(x_j))
 (\alpha \beta (a)
\alpha ^4\beta (y_j))\} \{\alpha \beta ^2(b) \alpha ^4\beta (y_i)\}\\
&&+\sum _{i, j}\{\alpha \beta ^4 (x_i) (\alpha \beta (a) \alpha ^2\beta ^3(x_j))\}
\{(\beta ^2(b) \alpha ^3\beta ^2(y_j))
 \alpha ^4\beta (y_i)\}\\
&\overset{(\ref{BHassoc})}{=}&\sum _{i, j}\{(\beta ^4(x_i) \alpha ^2\beta ^3(x_j))
 (\alpha \beta (a)
\alpha ^4\beta (y_j))\} \{\alpha \beta ^2(b) \alpha ^4\beta (y_i)\}\\
&&+\sum _{i, j}\{(\beta ^4 (x_i) \alpha \beta (a)) \alpha ^2\beta ^4(x_j)\} 
\{\alpha \beta ^2(b) (\alpha ^3\beta ^2(y_j)
 \alpha ^4(y_i))\}. 
\end{eqnarray*}
By using the identities 
$\sum _i\beta ^4(x_i)\otimes \alpha ^4\beta (y_i)=\sum _i\beta ^3(x_i)\otimes \alpha ^4(y_i)$ and 
$\sum _j\alpha ^2\beta ^3(x_j)\otimes \alpha ^4\beta (y_j)=\sum _j\beta ^2(x_j)\otimes \alpha ^2(y_j)$, 
for the first term, and 
$\sum _j \alpha ^2\beta ^4(x_j)\otimes \alpha ^3\beta ^2(y_j)=\sum _j\beta ^2(x_j)\otimes \alpha (y_j)$,
for the second term, identities that are consequences of the relation $(\alpha \otimes \alpha )(r)=r=
(\beta \otimes \beta )(r)$, the final expression becomes 
\begin{eqnarray*}
&&\sum _{i, j}\{(\beta ^3(x_i) \beta ^2(x_j)) (\alpha \beta (a)
\alpha ^2(y_j))\} \{\alpha \beta ^2(b) \alpha ^4(y_i)\}\\
&&\;\;\;\;\;+\sum _{i, j}\{(\beta ^4(x_i) \alpha \beta (a)) 
\beta ^2(x_j)\} \{\alpha \beta ^2(b) (\alpha (y_j) \alpha ^4(y_i))\}, 
\end{eqnarray*}
and this coincides with the expression we obtained for $R(\alpha \beta (a))R(\alpha \beta (b))$.
\end{proof}
\begin{corollary}\label{RBAHYBE}
Let $(A, \mu , \alpha )$ be a Hom-associative algebra and $r=\sum _ix_i\otimes y_i\in A\otimes A$ such that 
$(\alpha \otimes \alpha )(r)=r$ and $r$ is a solution of the associative Hom-Yang-Baxter equation. 
Define $R:A\rightarrow A$, $R(a)=\sum _i\alpha (x_i)(ay_i)=\sum _i (x_ia)\alpha (y_i)$. Then $R$ is an 
$\alpha ^2$-Rota-Baxter operator. 
\end{corollary}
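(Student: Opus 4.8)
The plan is to deduce this corollary directly from \thref{ABRB} by specializing to the Hom case $\beta =\alpha $. First I would observe that a Hom-associative algebra $(A,\mu ,\alpha )$ is precisely a BiHom-associative algebra $(A,\mu ,\alpha ,\beta )$ with $\beta =\alpha $, so all the hypotheses of \thref{ABRB} are available once we set $\beta =\alpha $. Under this identification the condition $(\alpha \otimes \alpha )(r)=r=(\beta \otimes \beta )(r)$ collapses to the single requirement $(\alpha \otimes \alpha )(r)=r$, and, as already noted in the remark following the definition of the associative BiHom-Yang-Baxter equation, equation (\ref{BHAYBE2}) reduces to the associative Hom-Yang-Baxter equation (\ref{AHYBE}). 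Thus the element $r$ in the hypothesis of the corollary is exactly an element to which \thref{ABRB} applies with $\beta =\alpha $.

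The next step is to check that the map $R$ from \thref{ABRB} specializes to the one in the corollary. Setting $\beta =\alpha $ in (\ref{RAB}) gives
\begin{eqnarray*}
&&R(a)=\sum _i\alpha ^4(x_i)(a\alpha ^3(y_i))=\sum _i(\alpha ^3(x_i)a)\alpha ^4(y_i).
\end{eqnarray*}
Here I would use that $(\alpha \otimes \alpha )(r)=r$ forces $(\alpha ^3\otimes \alpha ^3)(r)=r$, i.e. $\sum _i\alpha ^3(x_i)\otimes \alpha ^3(y_i)=\sum _ix_i\otimes y_i$. Applying $\alpha \otimes id$ to this identity yields $\sum _i\alpha ^4(x_i)\otimes \alpha ^3(y_i)=\sum _i\alpha (x_i)\otimes y_i$, which turns the first expression into $R(a)=\sum _i\alpha (x_i)(ay_i)$; applying $id\otimes \alpha $ instead yields $\sum _i\alpha ^3(x_i)\otimes \alpha ^4(y_i)=\sum _ix_i\otimes \alpha (y_i)$, which turns the second expression into $R(a)=\sum _i(x_ia)\alpha (y_i)$. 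These are exactly the two formulas defining $R$ in the statement of the corollary.

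Finally, \thref{ABRB} tells us that this $R$ is an $\alpha \beta $-Rota-Baxter operator, that is, it satisfies (\ref{generRB}). Substituting $\beta =\alpha $ makes $\alpha \beta =\alpha ^2$, so (\ref{generRB}) becomes
\begin{eqnarray*}
&&R(\alpha ^2(a))R(\alpha ^2(b))=R(\alpha ^2(a)R(b)+R(a)\alpha ^2(b)),\;\;\;\forall \;\;a,b\in A,
\end{eqnarray*}
which is precisely condition (\ref{alfan}) for $n=2$, i.e. the statement that $R$ is an $\alpha ^2$-Rota-Baxter operator. Since $R$ commutes with $\alpha $ (this also follows from \thref{ABRB}, or directly from $(\alpha \otimes \alpha )(r)=r$), all requirements in the definition of an $\alpha ^2$-Rota-Baxter operator are met. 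I do not expect any genuine obstacle here: the only non-automatic point is the rewriting of the coefficient formula for $R$, and that is a short application of the $\alpha \otimes \alpha $-invariance of $r$; the substance of the result is already contained in \thref{ABRB}.
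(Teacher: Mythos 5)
Your proposal is correct and takes essentially the same route as the paper: the paper's proof likewise just sets $\beta =\alpha $ in Theorem \ref{ABRB} and observes that, because $(\alpha \otimes \alpha )(r)=r$, formula (\ref{RAB}) collapses to $R(a)=\sum _i\alpha (x_i)(ay_i)=\sum _i(x_ia)\alpha (y_i)$. Your explicit rewriting via $(\alpha ^3\otimes \alpha ^3)(r)=r$ is precisely the detail the paper leaves implicit, so there is nothing to add.
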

\begin{proof}
Take $\alpha =\beta $ in the previous theorem and note that, for $\alpha =\beta $, since $(\alpha \otimes \alpha )(r)=r$, 
the formula (\ref{RAB}) becomes $R(a)=\sum _i\alpha (x_i)(ay_i)=\sum _i (x_ia)\alpha (y_i)$.
\end{proof}
\section{Hom-pre-Lie algebras from infinitesimal Hom-bialgebras}\label{sec4}
\setcounter{equation}{0}
In this section we derive Hom-pre-Lie algebras from infinitesimal Hom-bialgebras, generalizing Aguiar's result in the classical case.
\begin{proposition}\label{genGD}
Let $(A, \mu , \alpha )$ be a commutative Hom-associative algebra, $k$ a natural number and $D:A\rightarrow A$ an 
$\alpha ^k$-derivation, that is $D$ is a linear map commuting with $\alpha $ and 
\begin{eqnarray}
&&D(ab)=D(a)\alpha ^k(b)+\alpha ^k(a)D(b), \;\;\;\forall \;\;a, b\in A. \label{gender}
\end{eqnarray}
Define a new operation on $A$ by 
\begin{eqnarray}
&&x\bullet y=\alpha ^k(x)D(y), \;\;\; \forall \;\;x, y\in A. \label{operbullet}
\end{eqnarray}
Then $(A, \bullet , \alpha ^{k+1})$ 
is a Hom-Novikov algebra. 
\end{proposition}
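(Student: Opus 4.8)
The plan is to verify directly the three defining conditions of a Hom-Novikov algebra for the triple $(A, \bullet , \alpha^{k+1})$: multiplicativity of $\alpha^{k+1}$ with respect to $\bullet$, the left Hom-pre-Lie identity, and the extra Novikov condition $(x\bullet y)\bullet \alpha^{k+1}(z)=(x\bullet z)\bullet \alpha^{k+1}(y)$. Multiplicativity is immediate: since $\alpha$ is multiplicative and $D$ commutes with $\alpha$, both $\alpha^{k+1}(x\bullet y)$ and $\alpha^{k+1}(x)\bullet \alpha^{k+1}(y)$ reduce to $\alpha^{2k+1}(x)D(\alpha^{k+1}(y))$.

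The heart of the argument is to compute the two relevant triple products and reduce them to manifestly symmetric expressions. First I would expand, using only the definition (\ref{operbullet}), multiplicativity of $\alpha$, and $D\circ\alpha=\alpha\circ D$,
\[
(x\bullet y)\bullet \alpha^{k+1}(z)=(\alpha^{2k}(x)\,\alpha^k(D(y)))\,\alpha^{k+1}(D(z)).
\]
A single application of Hom-associativity (\ref{Hass}) pulls out $\alpha^{2k+1}(x)$ and yields
\[
(x\bullet y)\bullet \alpha^{k+1}(z)=\alpha^{2k+1}(x)\,(\alpha^k(D(y))\,\alpha^k(D(z))).
\]
Because the inner factor $\alpha^k(D(y))\,\alpha^k(D(z))$ is symmetric in $y$ and $z$ by commutativity, the Novikov condition follows at once.

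For the pre-Lie identity I would expand $\alpha^{k+1}(x)\bullet(y\bullet z)=\alpha^{2k+1}(x)\,D(\alpha^k(y)\,D(z))$ and apply the $\alpha^k$-derivation rule (\ref{gender}) to $D(\alpha^k(y)\,D(z))$, producing the two summands $\alpha^k(D(y))\,\alpha^k(D(z))$ and $\alpha^{2k}(y)\,D^2(z)$. The first summand reproduces exactly the expression for $(x\bullet y)\bullet \alpha^{k+1}(z)$ from the previous step, so the associator collapses to
\[
\alpha^{k+1}(x)\bullet(y\bullet z)-(x\bullet y)\bullet \alpha^{k+1}(z)=\alpha^{2k+1}(x)\,(\alpha^{2k}(y)\,D^2(z)).
\]
Here one use of Hom-associativity rewrites the right-hand side as $(\alpha^{2k}(x)\,\alpha^{2k}(y))\,\alpha(D^2(z))$; commutativity makes $\alpha^{2k}(x)\,\alpha^{2k}(y)$ symmetric in $x$ and $y$, and running (\ref{Hass}) backwards gives $\alpha^{2k+1}(y)\,(\alpha^{2k}(x)\,D^2(z))$, which is precisely the same associator with $x$ and $y$ interchanged. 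This establishes the left Hom-pre-Lie identity.

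The computations are short once the powers of $\alpha$ are tracked correctly; the only point requiring care is the bookkeeping of exponents when commuting $D$ past $\alpha$ and invoking (\ref{Hass}). The conceptual content is simply that, after a single use of the derivation property, both the Novikov expression and the pre-Lie associator become products in which commutativity supplies the required symmetry, so the main obstacle is organizing the exponent arithmetic rather than any genuine structural difficulty.
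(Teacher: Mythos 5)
Your proposal is correct and takes essentially the same route as the paper's proof: expand the triple products via (\ref{operbullet}), apply the $\alpha^k$-derivation rule (\ref{gender}) and Hom-associativity (\ref{Hass}) so that the associator collapses to $\alpha^{2k+1}(x)(\alpha^{2k}(y)D^2(z))=(\alpha^{2k}(x)\alpha^{2k}(y))\alpha(D^2(z))$, and let commutativity supply the symmetry in both the pre-Lie and Novikov identities. The only cosmetic difference is ordering: you establish the Novikov computation first and reuse it to cancel a term in the pre-Lie associator, while the paper performs the two verifications separately in the opposite order.
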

\begin{proof}
Since $D$ commutes with $\alpha $, it is obvious that $\alpha ^{k+1}(x\bullet y)=\alpha ^{k+1}(x)\bullet \alpha ^{k+1}(y)$, 
for all $x, y\in A$. Now we compute:\\[2mm]
${\;\;\;\;\;}$
$\alpha ^{k+1}(x)\bullet (y\bullet z)-(x\bullet y)\bullet \alpha ^{k+1}(z)$
\begin{eqnarray*}
&=&\alpha ^{k+1}(x)\bullet (\alpha ^k(y)D(z))-(\alpha ^k(x)D(y))\bullet \alpha ^{k+1}(z)\\
&=&\alpha ^{2k+1}(x)D(\alpha ^k(y)D(z))-\alpha ^k(\alpha ^k(x)D(y))D(\alpha ^{k+1}(z))\\
&\overset{(\ref{gender})}{=}&\alpha ^{2k+1}(x)(D(\alpha ^k(y))\alpha ^k(D(z))+\alpha ^{2k}(y)D^2(z))\\
&&-
(\alpha ^{2k}(x)\alpha ^k(D(y)))\alpha ^{k+1}(D(z))\\
&\overset{(\ref{Hass})}{=}&\alpha ^{2k+1}(x)(\alpha ^k(D(y))\alpha ^k(D(z)))+
\alpha ^{2k+1}(x)(\alpha ^{2k}(y)D^2(z))\\
&&-\alpha ^{2k+1}(x)(\alpha ^k(D(y))\alpha ^k(D(z)))\\
&\overset{(\ref{Hass})}{=}&(\alpha ^{2k}(x)\alpha ^{2k}(y))\alpha (D^2(z))
=\alpha ^{2k}(xy)\alpha (D^2(z)),
\end{eqnarray*}
and since $xy=yx$, this expression is obviously symmetric in $x$ and $y$, so $(A, \bullet , \alpha ^{k+1})$ is a left  
Hom-pre-Lie algebra.  
Now we compute:
\begin{eqnarray*}
(x\bullet y)\bullet \alpha ^{k+1}(z)&=&(\alpha ^k(x)D(y))\bullet \alpha ^{k+1}(z)
=\alpha ^k(\alpha ^k(x)D(y))D(\alpha ^{k+1}(z))\\
&=&(\alpha ^{2k}(x)\alpha ^k(D(y)))\alpha ^{k+1}(D(z))\\
&\overset{(\ref{Hass})}{=}&\alpha ^{2k+1}(x)(\alpha ^k(D(y))\alpha ^k(D(z)))
=\alpha ^{2k+1}(x)\alpha ^k(D(y)D(z))\\
&\overset{commutativity}{=}&\alpha ^{2k+1}(x)\alpha ^k(D(z)D(y))=(x\bullet z)\bullet \alpha ^{k+1}(y). 
\end{eqnarray*}
So indeed $(A, \bullet , \alpha ^{k+1})$ 
is a Hom-Novikov algebra. 
\end{proof}

By taking $k=0$ in the Proposition, we obtain: 
\begin{corollary}(\cite{yaunovikov}) 
Let $(A, \mu , \alpha )$ be a commutative Hom-associative algebra and $D:A\rightarrow A$ a derivation (in the usual sense)  
commuting with $\alpha $. 
Define a new operation on $A$ by 
$x\bullet y=xD(y)$, for all $x, y\in A$. 
Then $(A, \bullet , \alpha )$ 
is a Hom-Novikov algebra. 
\end{corollary}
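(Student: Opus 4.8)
The plan is to derive this corollary directly as the special case $k=0$ of Proposition~\ref{genGD}, so that no independent computation is needed beyond matching hypotheses and conclusions. First I would observe that $\alpha^{0}=id_A$. With this, the defining identity (\ref{gender}) of an $\alpha^{k}$-derivation, namely $D(ab)=D(a)\alpha^{k}(b)+\alpha^{k}(a)D(b)$, specializes for $k=0$ to $D(ab)=D(a)b+aD(b)$, which is exactly the condition that $D$ be a derivation in the usual sense. Since $D$ is moreover assumed to commute with $\alpha$, the map $D$ is precisely an $\alpha^{0}$-derivation as required by Proposition~\ref{genGD}.

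Next I would match the remaining data. The multiplication (\ref{operbullet}) is $x\bullet y=\alpha^{k}(x)D(y)$, which at $k=0$ reads $x\bullet y=id_A(x)D(y)=xD(y)$, coinciding with the operation in the statement. The structure map supplied by Proposition~\ref{genGD} is $\alpha^{k+1}$, which at $k=0$ equals $\alpha$. Hence Proposition~\ref{genGD} asserts that $(A,\bullet,\alpha)$ is a Hom-Novikov algebra, which is the claim.

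There is no genuine obstacle here, as the corollary is a pure specialization; the only point worth verifying is the identification of an $\alpha^{0}$-derivation with a classical derivation together with the bookkeeping of exponents. If one prefers a self-contained check, I would simply run the computation in the proof of Proposition~\ref{genGD} with $k=0$: the expression $\alpha^{k+1}(x)\bullet(y\bullet z)-(x\bullet y)\bullet\alpha^{k+1}(z)$ collapses to $(xy)\alpha(D^{2}(z))$, which is symmetric in $x$ and $y$ by commutativity and hence yields the left Hom-pre-Lie identity, while the Hom-Novikov identity (\ref{homnovikov}) follows from $D(y)D(z)=D(z)D(y)$, again a consequence of commutativity.
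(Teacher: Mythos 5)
Your proposal is correct and matches the paper's own proof exactly: the paper obtains this corollary precisely by setting $k=0$ in Proposition \ref{genGD}, with the same identifications ($\alpha^{0}=id_A$, an $\alpha^{0}$-derivation being a usual derivation commuting with $\alpha$, and the structure map $\alpha^{k+1}$ becoming $\alpha$). Your optional self-contained check is just the proposition's computation specialized to $k=0$, so nothing is missing.
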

\begin{proposition} \label{alphasquare}
Let $(A, \mu , \Delta , \alpha )$ be an infinitesimal Hom-bialgebra. Define the linear map $D:A\rightarrow A$, 
$D(a)=a_1a_2$ for all $a\in A$, i.e. $D=\mu \circ \Delta $. 
Then $D$ is an $\alpha ^2$-derivation. 
\end{proposition}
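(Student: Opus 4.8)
The plan is to verify the two requirements in the definition of an $\alpha^2$-derivation separately: first that $D$ commutes with $\alpha$, and then the twisted Leibniz identity $D(ab)=D(a)\alpha^2(b)+\alpha^2(a)D(b)$. The first requirement is immediate. Since $(A,\Delta,\alpha)$ is a Hom-coassociative coalgebra we have $(\alpha\otimes\alpha)\circ\Delta=\Delta\circ\alpha$, so in Sweedler notation $\alpha(a)_1\otimes\alpha(a)_2=\alpha(a_1)\otimes\alpha(a_2)$. Applying $\mu$ and using multiplicativity of $\alpha$ then gives $D(\alpha(a))=\alpha(a_1)\alpha(a_2)=\alpha(a_1a_2)=\alpha(D(a))$, which is what we want.

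For the Leibniz identity, the idea is to compute $D(ab)=\mu(\Delta(ab))$ directly from the infinitesimal Hom-bialgebra compatibility condition (\ref{hominf}), which reads $\Delta(ab)=\alpha(a)b_1\otimes\alpha(b_2)+\alpha(a_1)\otimes a_2\alpha(b)$. Applying $\mu$ to both sides yields $D(ab)=(\alpha(a)b_1)\alpha(b_2)+\alpha(a_1)(a_2\alpha(b))$. The whole content of the proof is then to recognize each of these two summands as one of the desired terms by a single use of Hom-associativity (\ref{Hass}). Reading (\ref{Hass}) as $(xy)\alpha(z)=\alpha(x)(yz)$ and taking $x=\alpha(a)$, $y=b_1$, $z=b_2$ turns the first summand into $\alpha^2(a)(b_1b_2)=\alpha^2(a)D(b)$; taking instead $x=a_1$, $y=a_2$, $z=\alpha(b)$ turns the second summand into $(a_1a_2)\alpha^2(b)=D(a)\alpha^2(b)$. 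Adding the two gives exactly $D(ab)=D(a)\alpha^2(b)+\alpha^2(a)D(b)$.

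There is no genuine obstacle here; the only point requiring care is to apply Hom-associativity in the correct reading for each summand (the first summand already has the shape $(xy)\alpha(z)$, while the second has the shape $\alpha(x)(yz)$), and to notice that the extra copy of $\alpha$ built into the compatibility condition (\ref{hominf}) is precisely what upgrades the power from $\alpha$ to $\alpha^2$. It is this feature that prevents $D$ from being an ordinary derivation and forces the $\alpha^2$-derivation structure anticipated in the introduction, so that Proposition \ref{genGD} must be invoked with $k=2$ rather than Yau's original ($k=0$) version.
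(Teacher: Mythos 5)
Your proof is correct and takes essentially the same route as the paper's: apply the compatibility condition (\ref{hominf}) to obtain $D(ab)=(\alpha (a)b_1)\alpha (b_2)+\alpha (a_1)(a_2\alpha (b))$, then use Hom-associativity (\ref{Hass}) once on each summand to get $\alpha ^2(a)D(b)+D(a)\alpha ^2(b)$. The only difference is that you spell out the commutation of $D$ with $\alpha$ (via $(\alpha \otimes \alpha )\circ \Delta =\Delta \circ \alpha$ and multiplicativity of $\alpha$), which the paper dismisses as obvious.
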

\begin{proof}
Obviously $D$ commutes with $\alpha $ and, for all $a, b\in A$, we have 
\begin{eqnarray*}
D(ab)&\overset{(\ref{hominf})}{=}&(\alpha (a)b_1)\alpha (b_2)+\alpha (a_1)(a_2\alpha (b))\\
&\overset{(\ref{Hass})}{=}&\alpha ^2(a)(b_1b_2)+(a_1a_2)\alpha ^2(b)=\alpha ^2(a)D(b)+D(a)\alpha ^2(b),
\end{eqnarray*}
finishing the proof.
\end{proof}

Let now $(A, \mu , \Delta , \alpha )$ be a commutative infinitesimal Hom-bialgebra. 
By using Propositions \ref{alphasquare} and \ref{genGD}, we obtain a Hom-Novikov algebra 
$(A, \bullet , \alpha ^3)$, where  
\begin{eqnarray*}
x\bullet y&\overset{(\ref{operbullet})}{=}&\alpha ^2(x)D(y)=\alpha ^2(x)(y_1y_2)\\
&\overset{(\ref{Hass})}{=}&(\alpha (x)y_1)\alpha (y_2)\\
&\overset{commutativity}{=}&(y_1\alpha (x))\alpha (y_2)\\
&\overset{(\ref{Hass})}{=}&\alpha (y_1)(\alpha (x)y_2).
\end{eqnarray*}

Inspired by this, now we have:
\begin{proposition}\label{infprelie}
Let $(A, \mu , \Delta , \alpha )$ be an infinitesimal Hom-bialgebra, and define a new multiplication on $A$ by 
\begin{eqnarray}
&&x\bullet y=\alpha (y_1)(\alpha (x)y_2)=(y_1\alpha (x))\alpha (y_2), \;\;\;\forall \;\;x, y\in A. \label{formbullet}
\end{eqnarray}
Then $(A, \bullet , \alpha ^3)$ is a left Hom-pre-Lie algebra. 
\end{proposition}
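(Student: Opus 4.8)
The plan is to check the two defining properties of a left Hom-pre-Lie algebra with structure map $\alpha^3$, following the pattern of Aguiar's proof of Theorem \ref{zerounu} while carefully tracking the powers of $\alpha$ that the Hom-setting inserts on each factor.

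First I would dispose of the multiplicativity condition. Since $\alpha$ is an algebra map and commutes with $\Delta$ (so that $\alpha(y_1)\otimes\alpha(y_2)=\Delta(\alpha(y))$), a one-line computation from (\ref{formbullet}) gives $\alpha(x\bullet y)=\alpha(x)\bullet\alpha(y)$, and iterating yields $\alpha^3(x\bullet y)=\alpha^3(x)\bullet\alpha^3(y)$, so $\alpha^3$ is a morphism for $\bullet$. The heart of the matter is the left Hom-pre-Lie identity, i.e. the symmetry in $x,y$ of $\alpha^3(x)\bullet(y\bullet z)-(x\bullet y)\bullet\alpha^3(z)$, which I would verify by computing the two summands separately. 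The term $(x\bullet y)\bullet\alpha^3(z)$ is the easy one: applying (\ref{formbullet}) to the outer product and using $\Delta(\alpha^3(z))=\alpha^3(z_1)\otimes\alpha^3(z_2)$ together with $\alpha(x\bullet y)=\alpha^2(y_1)(\alpha^2(x)\alpha(y_2))$, it reduces to a single fully-bracketed word in $x$, $\Delta(y)$ and $\Delta(z)$, which I would normalise using Hom-associativity (\ref{Hass}). The term $\alpha^3(x)\bullet(y\bullet z)$ is the laborious one, because it requires the comultiplication of the product $y\bullet z$. I would obtain $\Delta(y\bullet z)$ by applying the infinitesimal axiom (\ref{hominf}) to the nested product $\alpha(z_1)(\alpha(y)z_2)$ — twice, since the inner factor $\alpha(y)z_2$ is itself a product — and then feed the result into the outer $\bullet$. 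This produces three families of terms: one in which the comultiplication lands on $y$ (and hence involves $\Delta(y)$), and two in which it lands on $z$ (and hence involve the iterated coproduct of $z$, written via $z_{(1,1)}\otimes z_{(1,2)}\otimes z_2$ and $z_1\otimes z_{(2,1)}\otimes z_{(2,2)}$).

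Having both summands expanded, I would form the difference and compare it with its $x\leftrightarrow y$ swap. The guiding picture is Aguiar's classical computation: there the ``comultiplication-on-$y$'' contribution to $x\bullet(y\bullet z)$ matches $(x\bullet y)\bullet z$ and drops out, while the two ``comultiplication-on-$z$'' contributions are carried into one another by the swap once they are written in a common Sweedler convention through coassociativity, the leftover being visibly symmetric in $x$ and $y$. I would try to reproduce this matching, using Hom-associativity (\ref{Hass}) to reconcile the different bracketings and differing powers of $\alpha$ on each leg, and Hom-coassociativity (\ref{Hcoass}), in the form $z_{(1,1)}\otimes z_{(1,2)}\otimes\alpha(z_2)=\alpha(z_1)\otimes z_{(2,1)}\otimes z_{(2,2)}$ (shifting $\alpha$'s across a coproduct freely by means of $\Delta\alpha=(\alpha\otimes\alpha)\Delta$), to pass between the two conventions for the second coproduct of $z$. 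The main obstacle is precisely this $\alpha$-bookkeeping: the Hom-twists distribute different numbers of $\alpha$'s onto the two sides, so unlike in Aguiar's argument the naive termwise cancellation need not be exact, and one must keep the exponents aligned — reorganising via (\ref{Hass}) and (\ref{Hcoass}) — so that either the matching closes up exactly or the residual terms are themselves manifestly interchanged by $x\leftrightarrow y$. This exponent-tracking is the only real content beyond the classical computation, and once it is carried out correctly the symmetry, hence the pre-Lie identity, follows.
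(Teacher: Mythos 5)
Your plan is correct and coincides with the paper's own proof: after the same one-line multiplicativity check, the paper expands $\alpha^3(x)\bullet(y\bullet z)$ by applying (\ref{hominf}) twice, cancels the resulting $\Delta(y)$-term against $(x\bullet y)\bullet\alpha^3(z)$ via repeated Hom-associativity, and then uses Hom-coassociativity exactly in the form $z_{(1,1)}\otimes z_{(1,2)}\otimes\alpha(z_2)=\alpha(z_1)\otimes z_{(2,1)}\otimes z_{(2,2)}$ to bring the two remaining iterated-coproduct terms to a common Sweedler convention, where they are visibly interchanged by $x\leftrightarrow y$. The $\alpha$-bookkeeping you flag as the only real obstacle does close up exactly as you predict, so your matching scheme is the one that works.
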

\begin{proof}
Since $(\alpha \otimes \alpha )\circ \Delta =\Delta \circ \alpha $, it is easy to see that $\alpha ^3(x\bullet y)=
\alpha ^3(x)\bullet \alpha ^3(y)$, for all $x, y\in A$. Now, for all $x, y, z\in A$ we compute:\\[2mm]
${\;\;}$
$\alpha ^3(x)\bullet (y\bullet z)-(x\bullet y)\bullet \alpha ^3(z)$
\begin{eqnarray*}
&=&\alpha ^3(x)\bullet (\alpha (z_1)(\alpha (y)z_2))-(\alpha (y_1)(\alpha (x)y_2))\bullet \alpha ^3(z)\\
&=&\alpha ([\alpha (z_1)(\alpha (y)z_2)]_1)(\alpha ^4(x)[\alpha (z_1)(\alpha (y)z_2)]_2)\\
&&-\alpha ^4(z_1)\{[\alpha ^2(y_1)(\alpha ^2(x)\alpha (y_2))]\alpha ^3(z_2)\}\\
&\overset{(\ref{hominf})\;twice}{=}&\alpha (\alpha ^2(z_1)(\alpha ^2(y)z_{(2, 1)}))
[\alpha ^4(x)\alpha ^2(z_{(2, 2)})]
+\alpha (\alpha ^2(z_1)\alpha ^2(y_1))[\alpha ^4(x)(\alpha ^2(y_2)\alpha ^2(z_2))]\\
&&+\alpha ^3(z_{(1, 1)})[\alpha ^4(x)(\alpha (z_{(1, 2)})\alpha (\alpha (y)z_2))]
-\alpha ^4(z_1)\{[\alpha ^2(y_1)(\alpha ^2(x)\alpha (y_2))]\alpha ^3(z_2)\}\\
&=&\alpha ([\alpha ^2(z_1)(\alpha ^2(y)z_{(2, 1)})]
\alpha (\alpha ^2(x)z_{(2, 2)}))
+\alpha ^2(\alpha (z_1y_1)[\alpha ^2(x)(y_2z_2)])\\
&&+\alpha (\alpha ^2(z_{(1, 1)})[\alpha ^3(x)(z_{(1, 2)}(\alpha (y)z_2))])
-\alpha (\alpha ^3(z_1)\{[\alpha (y_1)(\alpha (x)y_2)]\alpha ^2(z_2)\}).
\end{eqnarray*}
We claim that the second and fourth terms in this expression cancel each other. To show this, it is enough to prove that 
$\alpha ^2(z_1y_1)[\alpha ^3(x)(\alpha (y_2)\alpha (z_2))]=
\alpha ^3(z_1)\{[\alpha (y_1)(\alpha (x)y_2)]\alpha ^2(z_2)\}$. 
We compute, by applying repeatedly the Hom-associativity condition:
\begin{eqnarray*}
\alpha ^3(z_1)\{[\alpha (y_1)(\alpha (x)y_2)]\alpha ^2(z_2)\}&=&
\alpha ^3(z_1)\{\alpha ^2(y_1)[(\alpha (x)y_2)\alpha (z_2)]\}\\
&=&[\alpha ^2(z_1)\alpha ^2(y_1)][(\alpha ^2(x)\alpha (y_2))\alpha ^2(z_2)]\\
&=&\alpha ^2(z_1y_1)[\alpha ^3(x)(\alpha (y_2)\alpha (z_2))], \;\;\;q.e.d.
\end{eqnarray*}
So, we can now write (by using both Hom-associativity and Hom-coassociativity): \\[2mm]
${\;\;}$
$\alpha ^3(x)\bullet (y\bullet z)-(x\bullet y)\bullet \alpha ^3(z)$
\begin{eqnarray*}
&=&\alpha ([\alpha ^2(z_1)(\alpha ^2(y)z_{(2, 1)})]
\alpha (\alpha ^2(x)z_{(2, 2)}))+\alpha (\alpha ^2(z_{(1, 1)})[\alpha ^3(x)(z_{(1, 2)}(\alpha (y)z_2))])\\
&=&\alpha ([(\alpha (z_1)\alpha ^2(y))\alpha (z_{(2, 1)})]
\alpha (\alpha ^2(x)z_{(2, 2)}))+\alpha ([\alpha (z_{(1, 1)})\alpha ^3(x)]\alpha (z_{(1, 2)}(\alpha (y)z_2)))\\
&=&\alpha ([\alpha ^2(z_1)\alpha ^3(y)][\alpha (z_{(2, 1)})
(\alpha ^2(x)z_{(2, 2)})]+[\alpha (z_{(1, 1)})\alpha ^3(x)][\alpha (z_{(1, 2)})(\alpha ^2(y)\alpha (z_2))])\\
&=&\alpha ([\alpha (z_{(1, 1)})\alpha ^3(y)][\alpha (z_{(1, 2)})
(\alpha ^2(x)\alpha (z_2))]+[\alpha (z_{(1, 1)})\alpha ^3(x)][\alpha (z_{(1, 2)})(\alpha ^2(y)\alpha (z_2))]),
\end{eqnarray*}
and this expression is obviously symmetric in $x$ and $y$. 
\end{proof}
\begin{remark}
The construction introduced in Proposition \ref{infprelie} is compatible with the Yau twist, in the following sense. 
Let $(A, \mu , \Delta )$ be an infinitesimal bialgebra and $\alpha :A\rightarrow A$ a morphism of infinitesimal 
bialgebras. Consider the Yau twist $A_{\alpha }=(A, \mu _{\alpha }=\alpha \circ \mu , 
\Delta _{\alpha }=\Delta \circ \alpha , \alpha )$ (with notation $\mu _{\alpha }(x\otimes y)=x*y=\alpha (xy)$ and 
$\Delta _{\alpha }(x)=x_{[1]}\otimes x_{[2]}=\alpha (x_1)\otimes \alpha (x_2)$), which is an 
infinitesimal Hom-bialgebra, to which we can apply Proposition \ref{infprelie} and obtain a left Hom-pre-Lie algebra 
with structure map $\alpha ^3$ and multiplication 
\begin{eqnarray*}
x\bullet y&=&\alpha (y_{[1]})*(\alpha (x)*y_{[2]})=\alpha ^2(y_1)*\alpha ^2(xy_2)=\alpha ^3(y_1xy_2).
\end{eqnarray*}
This is exactly the Yau twist via the map $\alpha ^3$ of the left pre-Lie algebra obtained from $(A, \mu , \Delta )$ by 
Theorem \ref{zerounu}. 
\end{remark}

Assume now that we have a quasitriangular infinitesimal Hom-bialgebra $(A, \mu , \Delta _r, \alpha )$ as in Definition 
\ref{QTIHB}; 
there are two left Hom-pre-Lie 
algebras that may be associated to $A$, and we want to show that they coincide. 

The first one is $(A, \bullet , \alpha ^3)$ 
obtained from $A$ by using Proposition \ref{infprelie}, with multiplication
\begin{eqnarray*}
a\bullet b&=&\alpha (b_1)(\alpha (a)b_2)=\sum _i\alpha ^2(x_i)(\alpha (a)(y_ib))-\sum _i\alpha (bx_i)(\alpha (a)\alpha (y_i))\\
&\overset{(\ref{Hass})}{=}&\sum _i\alpha ^2(x_i)[(ay_i)\alpha (b)]-\sum _i[\alpha (b)\alpha (x_i)]\alpha (ay_i)\\
&\overset{(\ref{Hass})}{=}&\sum _i[\alpha (x_i)(ay_i)]\alpha ^2(b)-\sum _i\alpha ^2(b)[\alpha (x_i)(ay_i)]. 
\end{eqnarray*}

The second is obtained by applying Corollary \ref{moregendend} (for $n=2$) to the $\alpha ^2$-Rota-Baxter 
operator $R$ defined in Corollary \ref{RBAHYBE}. So, its structure map is $\alpha ^3$ and the 
multiplication is 
\begin{eqnarray*}
&&a\circ b=R(a)\alpha ^2(b)-\alpha ^2(b)R(a)=\sum _i[\alpha (x_i)(ay_i)]\alpha ^2(b)-\sum _i\alpha ^2(b)[\alpha (x_i)(ay_i)], 
\end{eqnarray*}
so indeed $\bullet $ and $\circ $ coincide. 

\subsection*{Acknowledgements}

Ling Liu was supported by NSFC (Grant No.11601486) and  Foundation of Zhejiang Educational Committee (Y201738645).
This paper was written while Claudia Menini was a member of the
''National Group for Algebraic and Geometric Structures, and their Applications'' (GNSAGA-INdAM).
Parts of this paper have been written while
Florin Panaite was a visiting professor at Universit\'{e} de Haute Alsace in May 2017 and a 
visiting professor at 
University of Ferrara in
September 2017 (supported by GNSAGA-INdAM); he would like to thank these institutions for hospitality and support.

\bibliographystyle{amsplain}
\providecommand{\bysame}{\leavevmode\hbox to3em{\hrulefill}\thinspace}
\providecommand{\MR}{\relax\ifhmode\unskip\space\fi MR }
\providecommand{\MRhref}[2]{%
  \href{http://www.ams.org/mathscinet-getitem?mr=#1}{#2}
}
\providecommand{\href}[2]{#2}

\end{document}